\documentclass[]{article} 
\usepackage[utf8]{inputenc}
\usepackage[english]{babel}
\bibliographystyle{plain}
\usepackage{color,soul}
\usepackage{graphicx}
\usepackage{graphics}
\usepackage{amssymb,amsmath,amsthm,epsfig}
\setlength{\textwidth}{6.0in} \setlength{\textheight}{8.5in}
\setlength{\headheight}{.2cm}\setlength{\topmargin}{-0.2cm}
\setlength{\evensidemargin}{0.15 in}
\setlength{\oddsidemargin}{0.15 in}
\newcommand{\norm}[1]{{\left\|{#1}\right\|}}

\newcommand{\scal}[1]{{\left\langle{#1}\right\rangle}}

\newcommand{\vp}{\psi^{(\alpha)}_{n,c}}

\newtheorem{theorem}{Theorem}
\newtheorem{lemma}{Lemma}

\newtheorem{proposition}{Proposition}

\newtheorem{remark}{Remark}
\setcounter{page}{1}

\newcounter{reh}
\setcounter{reh}{0}

\newcounter{rek}
\setcounter{rek}{0}

\vspace{5mm}
\numberwithin{equation}{section}
\begin{document}
	\begin{center}
		{\large {\bf Discrete Hankel Prolate Spheroidal Wave Functions: Spectral Analysis and Application. }}\\
		\vskip 1cm Mourad Boulsane$^*$ {\footnote{
				Corresponding author: Mourad Boulsane, Email: boulsane.mourad@hotmail.fr},\footnote{This work was supported by the tunisian DGRST Research Grant LR21ES10.}}
	\end{center}
	\vskip 0.5cm {\small
		\noindent $^*$ Carthage University,
		Faculty of Sciences of Bizerte, Department of  Mathematics Research Laboratory GAMA, Jarzouna, 7021, Tunisia.
	}
\begin{abstract}
Since the early 1960s, the fields of signal processing, data transmission, channel equalisation, filter design and others have been technologically developed and modernised as a result of the research carried out by D. Slepian and his co-authors H. J Landau and H. O Pollack on the time and band-limited wave system known as discrete and continuous spheroidal waves systems. Our aim in this paper is to introduce new discrete wave sequences called discrete Hankel Prolate spheroidal sequences {\bf DHPSS} and their counterparts in the frequency domain called discrete Hankel Prolate spheroidal wave functions {\bf DHPSWF} as radial parts of different solutions of a discrete multidimensional energy maximization problem similar to the one given by D. Slepian and which will generalize his classical pioneering work. In the meantime, we will ensure that our new family is the eigenfunctions set of a finite rank integral operator defined on $L^2(0,\omega),\,0<\omega<1,$ with an associated kernel given by $\sum_{k=1}^N\phi^{\alpha}_{n}(r)\phi^{\alpha}_{n}(r'),$ where $\phi^{\alpha}_{n}(r)=\frac{\sqrt{2r}J_{\alpha}(s_n^{(\alpha)}r)}{|J_{\alpha+1}(s^{(\alpha)}_n)|},0\leq r\leq 1.$ Here $J_\alpha$ is the Bessel function of the first kind and $(s_n^{(\alpha)})_n$ are the associated positive zeros.
In addition, we will extend the various classical results proposed concerning the decay rate and spectral distribution associated with the classical case, then we will finish our work by an application on the Ingham's universal constant which we will specify with an upper bound estimate. 
\end{abstract}
{\bf Keywords}: Eigenvalues and eigenfunctions. Discrete prolate spheroidal wave functions and
sequences. Hankel transform.\\
{\bf Mathematics Subject Classification Primary:} 42A38 · 15B52. Secondary 60F10 · 60B20
\section{Introduction}
About half a century ago and more, D. Slepian, H. J Landau and H. O Pollack \cite{Slepian1}, \cite{L. P}, \cite{Slepian2} discovered the prolate spheroidal wave functions {\bf PSWFs}, they are the best essentially time and band-limited signals with bandwidth $c > 0$. On the others words, they are the keys of energy maximization problem defined on the classical Paley-Wiener space $\mathcal{B}_c$, the space of functions from
$L^2(\mathbb{R})$ with Fourier transforms supported on $[-c,c],$ that is 
 $$\mbox{ Find }f\in \mathcal{B}_c \mbox{ such that } f=\arg\max_{f\in\mathcal{B}_c}\frac{\norm{f}^2_{L^2(-1,1)}}{\norm{f}^2_{L^2(\mathbb{R})}}.$$
 Then, one of the
 most important properties is that the first {\bf PSWF} of bandlimited orthogonal set {\bf PSWFs} comprise the highest energy concentration within a limited frequency band and all succeeding {\bf PSWFs} have a decreasing concentration. Technically, from the signal reconstruction principle (inversion formula) and Parseval identity, {\bf PSWFs} become the eigenfunctions of the finite Fourier transform $\mathcal{F}_c$ defined on $L^2(-1,1)$ with associated eigenvalues $\mu$, that is
 \begin{equation}
 \mathcal{F}_c(\varphi)(x):=\int_{-1}^1\varphi(t)e^{-itx}dt=\mu\varphi(x),\,\,x\in(-1,1).
 \end{equation}
 Further, they obey a bi-orthogonality law respectively given on $L^2(-1,1)$ and $L^2(\mathbb{R})$. In particular, they form an orthonormal basis of $L^2(-1,1)$ and an orthogonal basis of the Paley-Wiener space $\mathcal{B}_c$. From a surprise incident, D. Slepian has proved that our integral operator $\mathcal{F}_c$ commutes with a Sturm-Liouville
 differential operator $\mathcal{L}_c$
 defined on $C^2(-1,1)$ by
 \begin{equation}
 \mathcal{L}_c(\varphi)=-\frac{d}{dx}[(1-x^2)\varphi']+c^2x^2\varphi.
 \end{equation} 
 Honestly, from the previous two heart points and Sturm Liouville theory with min-max theorem, literature began to be written about the different properties of the prolate spheroidal wave functions. In addition, in a series of papers given by \cite{B.K},\cite{B.J.K},\cite{H}, authors have described with more precision the local estimates of {\bf PSWFs} with the decay rate of the associated eigenvalues as well as these important different applications in many areas such that signal processing theory and recently analysis statistical. To be fair, we should also talk about the multidimensional prolate spheroidal waves functions and these radial part discovered by D. Slepian \cite{Slepian3} and more described in section $3$ and don't forget the real weighted case analysed by Wang in a series of papers given by \cite{W}, \cite{W.Z}. Nonetheless, those continuous wave functions encountered numerical computation problems for some frequency band. So, it has been proved that we can perfectly implement a continuous time and band limited signal by a discrete time sequence system, when the sampling frequency is chosen according to the sampling theorem, for more details we refer the reader to \cite{T.R.J}. Discrete prolate spheroidal wave sequences {\bf DPSSs} and their counterpart in frequency domain called discrete prolate spheroidal wave functions {\bf DPSWFs} introduced by D. Slepian \cite{Slepian0} in the 70s are good solutions of the previous problem. The {\bf DPSWFs}, denoted by $\left(\varphi^{N}_{n,\omega}\right)_{0\leq n\leq N-1},$ are
 characterized as the amplitude spectra (Fourier series) of index-limited complex sequences
 with index support $[[0,..., N-1]]$, that are most concentrated in the interval $(-\omega,\omega),\,0<\omega<1/2$. These {\bf DPSSs} are infinite sequences in $\ell^2(\mathbb{C})$ with amplitude spectra supported in
 $[-\omega,\omega]$, with coefficients mostly concentrated in the index range $[[0, . . . , N-1]]$. The most important property of {\bf DPSSs} is that their truncated versions over $[|0,N-1|]$ are the $N$ eigenvectors of the Toeplitz matrix $\tilde{\rho}_{N,\mathcal{K}}$ defined as follow
 \begin{equation}
 \tilde{\rho}_{N,\omega}=\left(
 \frac{\sin(2\pi(n-m)\omega)}{\pi(n-m)}
 \right)_{0\leq n,m\leq N-1},
 \end{equation} with same eigenvalues $\tilde{\lambda}_{n,N}(\omega)$ of the integral operator $\tilde{\mathcal{Q}}_{N,\omega}$ given in \eqref{e11} and to be described in more details later. The body of work has varied in this regard. From a purely mathematical perspective, the author in \cite{Slepian0} relied on two important properties of {\bf DPSWFs}. Firstly, they are the eigenfunctions of the finite rank integral transform $\tilde{\mathcal{Q}}_{N,\omega}$ defined on $L^2(-\omega,\omega)$ with associated degenerated kernel $\tilde{\mathcal{K}}_{N,\omega}(x,y)=\displaystyle\frac{\sin(N\pi(x-y))}{\sin(\pi(x-y))},$ that is
 \begin{equation}\label{e11}
 \tilde{\mathcal{Q}}_{N,\omega}(\varphi^{N}_{n,\omega})(x):=\int_{-\omega}^{\omega}\frac{\sin(N\pi(x-y))}{\sin(\pi(x-y))}\varphi^{N}_{n,\omega}(y)dy=\tilde{\lambda}_{n,N}(\omega)\varphi^{N}_{n,\omega}(x),\,\,x\in(-1/2,1/2).
 \end{equation}
 Here $\tilde{\lambda}_{n,N}(\omega)$ are the decreasing associated eigenvalues, for more informations about decay rate, we refer the reader to \cite{B.B.K}. Secondly, from a commutativity reason between $\tilde{\mathcal{Q}}_{N,\omega}$ and a Sturm Liouville differential operator denoted by $\tilde{\mathcal{L}}_{N,\omega}$, the {\bf DPSWFs} become these eigenfunctions with increasing eigenvalues $\tilde{\theta}_{n,N}(\omega)$, see \cite{Slepian0}, that is for every $0\leq n\leq N-1,$ we have
 \begin{equation}
 \tilde{\mathcal{L}}_{N,\omega}(\varphi^{N}_{n,\omega}):=\frac{d}{df}\left[\left(\cos(f)-\cos(2\pi\omega)\right)\frac{d\varphi^{N}_{n,\omega}}{df}\right]+\left[\frac{1}{4}(N^2-1)\cos(f)\right]=\tilde{\theta}_{n,N}(\omega)\varphi^{N}_{n,\omega},
 \end{equation}
 where $f=2\pi x,\,\,x\in(-1/2,1/2).$ In this paper, we are interested to formalize a new generation of discrete time sequences and their counterpart in the frequency domain (Fourier Bessel series) developed on the orthonormal basis of $L^2(0,1)$ and given by $\phi^{\alpha}_{n}(r)=\frac{\sqrt{2r}J_{\alpha}(s_n^{(\alpha)}r)}{|J_{\alpha+1}(s^{(\alpha)}_n)|},\alpha\geq-1/2$ and $0\leq r\leq 1.$ Here $J_\alpha$ is the Bessel function of the first kind and $(s_n^{(\alpha)})_n$ is the associated sequence of positive zeros. Our aim here is to generalize the classical case which that the previous family covers the {\bf DPSWFs} for $\alpha=\pm1/2.$ Let's be more specific here, for $\mathcal{K}=\mathbb{B}(0_{\mathbb{R}^d},\omega)\subset \mathbb{B}^d,\,0<\omega<1,$ $d\geq 2$ and $\mathbb{B}^d$ is the unit ball of $\mathbb{R}^d$, in treating the discrete energy maximization problem in many variables based on the orthonormal eigenfunctions basis $(\Phi_k)_k$ of the resolvent associated with the Laplacian $\Delta$ defined on $L^2(\mathbb{B}^d)$, as the most concentrated frequency function in the compact region $\mathcal{K}$ with index support $[[0,..., N-1]],$ that is
 \begin{equation}
\mbox{Find } x\in S_N \mbox{ such that } x= \arg\max_{x\in S_N} \frac{\|\hat{x}\|^2_{L^2(\mathcal{K})}}{\|\hat{x}\|^2_{L^2(\mathbb{B}^d)}}, 
 \end{equation}
  where $S_N=\{x\in\ell^2(\mathbb{C}), x_n=0,\,\forall n\geq N\}$ and $\hat{x}=\sum_{k=0}^{N-1}x_k\Phi_k,$ we are lead to the following integral equation, 
 \begin{equation}\label{12}
 \int_0^{\omega}\left(\sum_{k=1}^N\phi^{\alpha}_{n}(r)\phi^{\alpha}_{n}(r')\right)\varphi(r)dr=\lambda\varphi(r'),\,\,r'\in(0,1),
 \end{equation}
 more details is given in section $3.$ In other words, the radial part of the multidimensional discrete frequency prolate are the eigenfunctions of the integral operator $\tilde{\mathcal{Q}}^{\alpha}_{N,\omega}$ defined on $L^2(0,1)$ with associated kernel  given in \eqref{12} by $\tilde{K}^{\alpha}_{N,\omega}(r,r')=\chi_{(0,\omega)}(r')\sum_{k=1}^N\phi^{\alpha}_{n}(r)\phi^{\alpha}_{n}(r')$. They are the most concentrated discrete frequency (Fourier Bessel series) time in $(0,\omega)$ with index support $[|0,...,N-1|]$ called discrete Hankel prolate spheroidal wave functions {\bf DHPSWFs} denoted by $\left(\varphi_{n,\omega}^{(N,\alpha)}\right)_{0\leq n\leq N-1}.$ Its associated discrete time sequence {\bf DHPSSs} denoted by $\left(x^N_n\right)_{0\leq n\leq N-1}$ are the infinite sequences of $\ell^2(\mathbb{C})$  with amplitude spectra (Fourier Bessel series) supported in
 $(0,\omega)$, with coefficients mostly concentrated in the index range $[[0, . . . , N-1]].$ Moreover, from computational reasons inspired by the similar result given in the classical framework, the truncated of {\bf DHPSSs} over $[|0,...,N-1|]$ are the $N$ eigenvectors of the matrix
 \begin{equation*}
 \rho_{N,\omega}^{\alpha}=\begin{pmatrix}
 \frac{2K^{\alpha}_{\omega}(s_j^{(\alpha)},s_k^{(\alpha)})}{\sqrt{s_j^{(\alpha)}}|J_{\alpha+1}(s_j^{(\alpha)})|\sqrt{s_k^{(\alpha)}}|J_{\alpha+1}(s_k^{(\alpha)})|}
 \end{pmatrix}_{1\leq j,k\leq N},
 \end{equation*}
 where $K^{\alpha}_{\omega}$ is the continuous kernel case associated to the Hankel prolate spheroidal wave functions {\bf HPSWFs} given by $ K^{\alpha}_{\omega}(x,y)=\omega\, G_\alpha(\omega x,\omega y)$ and  
 \begin{equation}\label{e17}
 G_\alpha(x,y)=\begin{cases}\frac{\sqrt{xy}}{x^2-y^2}\left(xJ_{\alpha+1}(x)J_{\alpha}(y)-yJ_{\alpha+1}(y)J_{\alpha}(x)\right)& x\neq y\\ \frac{1}{2}\left(\left(xJ_{\alpha+1}(x)\right)'J_{\alpha}(x)-xJ_{\alpha+1}(x)J_{\alpha}'(x)\right)&  x=y\end{cases}.
 \end{equation}
   Our objective from this work
 is to push forward the different results given in the classical case $(\alpha=\pm1/2)$ from \cite{B.B.K} and provide the reader with some useful explicit non asymptotic results about the decay rate of $\tilde{\mathcal{Q}}^{\alpha}_{N,\omega}-$spectrum denoted by  $\tilde{\lambda}^{N}_{n,\alpha}(\omega).$ That is for every $0\leq n\leq N-1,$ we have
 \begin{equation}
 \tilde{\lambda}_{n,\alpha}^N(c)\lesssim\frac{1}{\zeta}\frac{\sqrt{2}}{\sqrt{e(2N+\alpha+3/2)}}\left(\frac{e\omega s_{N+1}^{(\alpha)}}{4n+2\alpha+3}\right)^{2n+\alpha+1/2}.
 \end{equation}
 Here $\zeta=\frac{1}{2}\displaystyle\min_{1\leq i\leq N}\left\{\left|s_{i+1}^{(\alpha)}-s_i^{(\alpha)}\right|\right\}.$
 Further, we take advantage from the comparison property between the two spectra of the discrete and continuous cases when we will ensure with more precision the decay rate of the eigenvalues $\tilde{\lambda}_{n,\alpha}^N(c)$ from the different results about the non asymptotic decay rate of $\lambda_n^{\alpha}(c)$ the eigenvalue associated with the continuous Hankel prolate given in \cite{M.B}, that is for $c=\omega (s_N^{(\alpha)}+\varepsilon)$ and $0\leq n\leq N-1,$ we have
 \begin{equation*}
 \tilde{\lambda}_{n,\alpha}^N(\omega)\sim \lambda_{n}^{\alpha}\left(c\right).
 \end{equation*}
 Here $(s_N^{(\alpha)})_N$ are the positive zeros of Bessel function $J_\alpha$ of the first kind and $\varepsilon>0$ is a real constant to be fixed later. Moreover, we will give an approximation of the kernel $\tilde{K}^{\alpha}_{N,\omega}$, which is the key for more results about the distribution of  $\tilde{\mathcal{Q}}^{\alpha}_{N,\omega}-$spectrum by using estimates of $\mbox{Trace}(\tilde{\mathcal{Q}}^{\alpha}_{N,\omega})$ and his Hilbert-Schmidt norm $\norm{\tilde{\mathcal{Q}}^{\alpha}_{N,\omega}}_{HS}$, that is 
 \begin{equation*}
 \tilde{K}^{(\alpha)}_{N,\omega}(x,y)=K_{c_N}^{\alpha}(x,y)+F_{c_N}^{\alpha}(x,y).
 \end{equation*}
 Here $K_{c_N}^{\alpha}$ is the kernel associated with the operator $\mathcal{Q}_c^{\alpha}$ given by \eqref{e17} and for $V(r)=\int_0^{\infty}\frac{\sinh(rt)}{1+e^{-2t}}e^{-2t}dt,$ we have $$F_{c_N}^{\alpha}(x,y)\sim\frac{2}{\pi}\left[\sin((x+y)c_N-2\gamma_\alpha)V(x+y)+\sin((x-y)c_N)V(x-y)\right].$$
 More precisely, based on the previous estimate of the kernel $\tilde{K}^{(\alpha)}_{N,\omega}$ using the continuous kernel $K_{c_N}^{\alpha}$, we obtain the following result.
  For $0<\omega<1$, $\alpha\geq-1/2$ be two real numbers and $N\geq N_0$ be an integer, then for $c=(N+\frac{1}{2}\alpha+\frac{1}{4})\pi\omega$ and $0<\varepsilon<1/2,$  we have
  \begin{equation*}
  \displaystyle\#\{n,\,\varepsilon<\tilde{\lambda}_{n,\alpha}^N(\omega)<1-\varepsilon\}\leq\frac{M_{\alpha}}{\varepsilon(1-\varepsilon)}\left[\frac{\omega^2}{c}+\ln(c)+G(\omega)\right]
  \end{equation*}
 where $G(w)$ is specific given by proposition $4$ of section $4$ and $M_\alpha$ is a constant depending only of $\alpha.$
 
 Finally, this work is organized as follows. In section $2$, we will formalize the generalized discrete time and their counterpart in frequency domain over the weighted Hilbert space $L^2(\mathcal{O},\omega)$, where $\mathcal{O}$ is a subset of $\mathbb{R}^d,$ using an orthonormal basis denoted $(\psi_n)_n$ instead of the trigonometric exponential basis given in the classical case. In section $3$, we will apply the general principle to the eigenfunctions basis associated with the resolvent of Laplacian operator $\Delta$ over $L^2(\mathbb{B}^d),\,d\geq 2$ where $\mathbb{B}^d$ is the unit multidimensional ball. In treating the Generalized Discrete Prolate Spheroidal Wave Functions as the most concentrated frequency function in the compact region $\mathcal{K}=\mathbb{B}(0_{\mathbb{R}^d},\omega)\subset \mathbb{B}^d,\,0<\omega<1,$ with index support $[[0,..., N-1]],$ we are lead to prove that the radial part are the most concentrated discrete frequency time functions in $(0,\omega)$ with index support $[|0,...,N-1|]$ called discrete Hankel prolate spheroidal wave functions {\bf DHPSWFs}. In the meantime, we will describe the associated discrete Hankel prolate sequence {\bf DHPSSs}. Then, we will finished by recall some properties of the continuous case called Hankel prolate {\bf HPSWFs} discovered by D. Slepian in the $60$s. Taking into account that the {\bf DHPSWFs} are the eigenfunctions of a compact integral operator, we will focus in Section $4$ on the fast decay rate of the associated eigenvalues based on the Min-Max characterization. Moreover, we study some interesting connections between the eigenvalues associated with {\bf DHPSWFs} and their
 corresponding continuous case associated with  {\bf HPSWFs}. Based on these connections, we deduce
 various results on their distribution and decay rates. Finally, as an application, we are dealing with Ingham's universal constant which we will specify with a more precise upper estimate.    
\section{The general principle}
Let $N\geq 1$ be an integer and $\mathcal{O}$ is a subset of the euclidean space $\mathbb{R}^d,\,d\geq 1$. Let fixed an orthonormal basis of the weighted Hilbert space $L^2(\mathcal{O},\omega)$ denoted by $(\psi_n)_n$ and a compact subset $\mathcal{K}$ of $\mathcal{O}$, then we define the Paley-Wiener space $\mathcal{B}_{\mathcal{K}}$ the space of band-limited sequence. More precisely, the later is the subspace of $\ell^2(\mathbb{C})$ with supported Fourier series in $\mathcal{K},$ that is
\begin{equation}
\mathcal{B}_{\mathcal{K}}=\{x\in \ell^2(\mathbb{C}),\,\mbox{ Support }(\hat{x})\subset\mathcal{K} \}.
\end{equation}
where $\hat{x}=\sum_{n=0}^{+\infty}x_n\psi_n$. Inspired from the classic framework given by D. Slepian in \cite{Slepian0}, we will going to reform the generalized discrete prolate spheroidal wave sequence {\bf GDPSSs} as the sequence with band limited amplitude spectra $\hat{x}$ and most concentrated in index over $[|0,...,N-1|]$ from the following energy maximization problem:
\begin{equation}\label{e0}
	\mbox{Find } x\in \mathcal{B}_{\mathcal{K}} \mbox{ such that } x= \arg\max_{x\in \mathcal{B}_{\mathcal{K}}} \frac{\|x\|^2_{\ell^2(0,N-1)}}{\|x\|^2_{\ell^2(\mathbb{C})}}. 
\end{equation}
Indeed, let consider a sequence $x\in\mathcal{B}_{\mathcal{K}}$, then we have
\begin{eqnarray*}
\norm{x}^2_{\ell^2(0,N-1)}&=&\sum_{n=0}^{N-1}|x_n|^2=\sum_{n=0}^{N-1}x_n\overline{x_n}\\&=&\sum_{n=0}^{N-1}\int_{\mathcal{O}}\hat{x}(t)\overline{\psi_n}(t)\omega(t)dt\int_{\mathcal{O}}\overline{\hat{x}(s)}\psi_n(s)\omega(s)ds\\&=&\int_{\mathcal{K}}\overline{\hat{x}(s)}\left(\int_{\mathcal{K}}\hat{x}(t)\sum_{n=0}^{N-1}\psi_n(s)\overline{\psi_n}(t)\omega(t)dt\right)\omega(s)ds.
\end{eqnarray*}
From Parseval identity and since $x\in\mathcal{B}_{\mathcal{K}}$, we have $\norm{x}^2_{\ell^2(\mathbb{C})}=\norm{\hat{x}}^2_{L^2(\mathcal{O},\omega)}=\displaystyle\int_{\mathcal{K}}\overline{\hat{x}(s)}\hat{x}(s)\omega(s)ds.$ Consequently, we can easily check that the maximum given in \eqref{e0} is the largest eigenvalue of the integral equation
\begin{equation}\label{1}
\int_{\mathcal{\mathcal{K}}}K(s,t)\varphi (t)\omega(t)dt=\lambda \varphi(s),\,\,s\in \mathcal{K},	
\end{equation}
where $K(s,t)=\displaystyle\sum_{n=0}^{N-1}\psi_n(s)\overline{\psi_n}(t),\,\,(s,t)\in \mathcal{K}^2$ and $\varphi(s)=\displaystyle\sum_{n=0}^{+\infty}x_n\psi_n(s)$ with $x_n=\displaystyle\int_{\mathcal{K}}\varphi(t)\psi_n(t)\omega(t)dt.$\\
Let $S_N$ be the subspace of $\ell^2(\mathbb{C})$ with index limited sequence over $[|0,...,N-1|]$, that is 
$$S_N=\{x\in\ell^2(\mathbb{C}),\,x_n=0\,\,\forall n\geq N\}.$$
From an equivalent energy maximization problem introduced also by D. Slepian, given by 
\begin{equation}\label{e1}
\mbox{Find } x\in S_N \mbox{ such that } x= \arg\max_{x\in S_N} \frac{\|\hat{x}\|^2_{L^2(\mathcal{K},\omega)}}{\|\hat{x}\|^2_{L^2(\mathcal{O},\omega)}}, 
\end{equation}
we can easily check that $\frac{1}{\lambda}x\chi_{[|0,...,N-1|]}$, where $x$ is the {\bf GDPSS} solution of \eqref{e0}, is the index limited sequence over $[|0,...,N-1|]$ with amplitude spectra most concentrated on $\mathcal{K}$. Indeed, let $y\in S_N,$ since we have
\begin{eqnarray*}
\hat{y}(s)&=&\displaystyle\sum_{n=0}^{N-1}y_n\psi_n(s)=\sum_{n=0}^{N-1}\int_\mathcal{O}\hat{y}(t)\overline{\psi_n}(t)\omega(t)dt\psi_n(s)\\&=&\int_{\mathcal{O}}\hat{y}(t)\sum_{n=0}^{N-1}\overline{\psi_n}(t)\psi_n(s)\omega(t)dt,
\end{eqnarray*}
then, we get
\begin{eqnarray*}
\norm{\hat{y}}^2_{L^2(\mathcal{K},\omega)}&=&\int_{\mathcal{K}}\overline{\hat{y}(s)}\hat{y}(s)\omega(s)ds\\&=&\int_{\mathcal{K}}\hat{y}(s)\int_{\mathcal{O}}\overline{\hat{y}(t)}\sum_{n=0}^{N-1}\psi_n(t)\overline{\psi_n}(s)\omega(t)dt\omega(s)ds\\&=&\int_{\mathcal{O}}\overline{\hat{y}(t)}\int_{\mathcal{K}}\hat{y}(s)\sum_{n=0}^{N-1}\psi_n(t)\overline{\psi_n}(s)\omega(s)ds\omega(t)dt.
\end{eqnarray*}
Finally, from the fact that $\norm{\hat{y}}^2_{L^2(\mathcal{O},\omega)}=\displaystyle\int_{\mathcal{O}}\overline{\hat{y}(t)}\hat{y}(t)\omega(t)dt$, we can see that the largest eigenvalue of the integral equation expanded over $\mathcal{O}$ given in \eqref{1} is the solution of \eqref{e1}, that is
\begin{equation}\label{e.3}
\int_{\mathcal{\mathcal{K}}}K(s,t)\varphi (t)\omega(t)dt=\lambda \varphi(s),\,\,s\in \mathcal{O},	
\end{equation}
where $\varphi(s)=\sum_{n=0}^{N-1}y_n\psi_n(s)$ and from \eqref{e.3}, we have $\displaystyle\int_{\mathcal{K}}\varphi(t)\psi_n(t)\omega(t)dt=\lambda y_n,\,0\leq n\leq N-1.$
Consequently, we obtain an orthonormal system of eigenfunctions denoted by $(\varphi^N_n)_{0\leq n\leq N-1}$ called Generalized Discrete prolate spheroidal wave functions {\bf GDPSWFs} of the finite rank integral operator $\tilde{\mathcal{Q}}_N$ defined on $L^2(\mathcal{O},\omega)$ with degenerated kernel $\tilde{K}(s,t)=\chi_{\mathcal{K}}(t)\sum_{n=0}^{N-1}\psi_n(s)\overline{\psi_n}(t),\,(s,t)\in \mathcal{O}^2$ and associated eigenvalues $(\tilde{\lambda}^N_n)_{0\leq n\leq N-1}$ arranged in the decreasing order $1>\tilde{\lambda}^N_0>\tilde{\lambda}^N_1>...>\tilde{\lambda}^N_{N-1}>0$, that is for every $0\leq n\leq N-1$, we have
\begin{equation}\label{e.1}
\tilde{\mathcal{Q}}_N(\varphi^N_n)(s)=\int_{\mathcal{K}}\tilde{K}(s,t)\varphi^N_n(t)\omega(t)dt=\tilde{\lambda}^N_n\varphi^N_n(s),\,\,s\in \mathcal{O}.
\end{equation}
It is very important to give the relationship between {\bf GDPSWFs} denoted by $(\varphi^N_n)_{0\leq n\leq N-1}$ and {\bf GDPSSs} denoted by $(x^N_{n})_{0\leq n\leq N-1}$, that is for every $0\leq n\leq N-1,$ we have
\begin{equation}\label{e5}
 \varphi^N_n(s)=\begin{cases}\displaystyle\sum_{k=0}^{N-1}x_{k,n}^N\psi_k(s),\,\,\, s\in \mathcal{O}\\\displaystyle\tilde{\lambda}^N_n\sum_{k=0}^{+\infty}x_{k,n}^N\psi_k(s),\,\,\, s\in \mathcal{K}\end{cases}.
 \end{equation}
We should point out here that {\bf GDPSWFs}  satisfy an interesting bi-orthogonality property given by the following identity
\begin{equation}\label{e4}
\int_{\mathcal{K}}\varphi^N_n(t)\overline{\varphi^N_m}(t)\omega(t)dt=\tilde{\lambda}^N_n\int_{\mathcal{O}}\varphi^N_n(t)\overline{\varphi^N_m}(t)\omega(t)dt=\tilde{\lambda}^N_n\delta_{n,m}.
\end{equation}
Indeed, from the self adjoint property of $\tilde{\mathcal{Q}}_N$, $\left(\varphi^N_n\right)$ form an orthonormal system of $L^2(\mathcal{O},\omega)$ and from the fact that $\int_{\mathcal{K}}\varphi^N_n(s)\psi_k(s)\omega(s)ds=\tilde{\lambda}^N_nx_{k,n}^N$ given by \eqref{e.1}, we obtain the desired identity \eqref{e4}.
We should mention here that we gratefully acknowledge to the previous identity, especially over some particular properties of {\bf GDPSSs}. Indeed, from \eqref{e4}, the discrete sequence family satisfy a bi-orthogonal relation, that is
\begin{equation}
\displaystyle\sum_{k=0}^{N-1}x_{k,n}^Nx_{k,m}^N=\tilde{\lambda}^{N}_{n}\sum_{k=0}^{\infty}x_{k,n}^Nx_{k,m}^N=\delta_{n,m},\,\,0\leq n,m\leq N-1.
\end{equation}
Further more, the associated truncated over $[|0,...,N-1|]$ of {\bf GDPSSs} are the $N$ eigenvectors of the matrix $\tilde{\rho}_{N,\mathcal{K}}$ defined as follow
\begin{equation}
\tilde{\rho}_{N,\mathcal{K}}=\left(
\int_{\mathcal{K}}\psi_i(s)\psi_j(s)ds
\right)_{0\leq i,j\leq N-1}.
\end{equation}
Indeed, as we have  $\displaystyle\int_{\mathcal{K}}\varphi^N_n(s)\psi_k(s)\omega(s)ds=\tilde{\lambda}^N_nx_{k,n}^N$ and from \eqref{e5}, we can easily check that for every $0\leq n\leq N-1$, we have
\begin{equation}
\sum_{j=0}^{N-1}\tilde{\rho}_{N,\mathcal{K}}(i,j)x_{j,n}^{N}=\lambda_n^Nx_{i,n}^N,\,\,0\leq i\leq N-1.
\end{equation}

\section{Hankel Prolate Spheroidal wave functions: Discrete and continuous case}
\subsection{ Hankel Prolate spheroidal wave functions: Discrete case}
Let $d\geq 2$, we denoted by $\Delta$ the Laplacian defined on $\mathbb{B}^d$ the unit ball of the euclidean space $\mathbb{R}^d.$ It's well knowing that the resolvent of $\Delta$ is a compact operator, hence there exist an orthonormal eigenfunctions basis of $L^2(\mathbb{B}^d)$ given by the following
\begin{equation}\label{e.0}
\Phi_{n,m,k}^{d}(x)=\phi^{d}_{n,m}(\norm{x})Y_k^{(m)}\left(\frac{x}{\norm{x}}\right)\,\,\,n,m=1,2...\,,x\in\mathbb{B}^d,
\end{equation}
where $\left(Y_k^{(m)}\right)_{1\leq k\leq d(m)}$ is an orthonormal basis of the Spherical Harmonics of degree $m$ satisfying the following orthogonality relation 
\begin{equation}\label{e.4}
\int_{\mathbb{S}^{d-1}}Y_k^{(m)}(x)Y_j^{(n)}(x)d\sigma(x)=\delta_{k,j}\delta_{n,m}.
\end{equation} 
Here $\sigma$ is the surface of the unit sphere $\mathbb{S}^{d-1}$ and $d(m)=\frac{2m+d-2}{m}\begin{pmatrix}
m+d-3\\m-1
\end{pmatrix}$. For the rest of \eqref{e.0}, we consider the set of the positive zeros of Bessel function $J_{\alpha}$ of the first kind with order $\alpha\geq -1/2$ denoted by $(s_n^{(\alpha)})_{n\geq 1}$  satisfies the following estimate given in \cite{Elbert} by $s_n^{(\alpha)}=\pi n+\frac{\pi}{2}(\alpha-\frac{1}{2})+O(\frac{1}{n})$. For every $n,m\geq1,$ 
we define the normalized Bessel function $\phi^{d}_{n,m}$ on $(0,1)$ by  $$\phi^{d}_{n,m}(r)=r^{-\frac{d-2}{2}}\frac{\sqrt{2}J_{m+\frac{d-2}{2}}(s_n^{(m+\frac{d-2}{2})}r)}{|J_{m+\frac{d}{2}}(s_n^{(m+\frac{d-2}{2})})|}.$$ It's well known that for a fixed $m\geq 1,$ the previous system forms an orthonormal basis on $L^2\left((0,1),r^{d-1}dr\right)$, see \cite{H}, that is for every $\alpha\geq-1/2,$ we have
\begin{equation}
\int_0^1xJ_{\alpha}(s_i^{(\alpha)}x)J_{\alpha}(s_j^{(\alpha)}x)dx=\frac{|J_{\alpha+1}(s_j^{(\alpha)})|^2}{2}\delta_{i,j}.
\end{equation}
In order to define a new generation of discrete prolate, we will use the general principle with $\mathcal{O}=\mathbb{B}^d$ the unit ball and for a fixed $0<\omega<1$, $\mathcal{K}=\mathbb{B}^d(0_{\mathbb{R}^d},\omega)$ is the closure ball with radius $\omega$ and centre $0_{\mathbb{R}^d}.$ For ending, we will take the orthonormal basis of $L^2(\mathbb{B}^d)$ defined above by $\left(\Phi_{n,m,k}^d\right),$ then from the general principle section, we can easily check that the integral equation \eqref{1} have a bounded solution $f$ defined on $L^2(\mathbb{B}^d)$, that is for every $N,M\geq 1$, we have 
\begin{equation}\label{2}
\int_{\mathcal{K}}K^{(d)}_{N,M}(x,y)f(y)dy=\lambda f(x),\,\,x\in\mathbb{B}^d,
\end{equation}
where
\begin{equation}
 K^{(d)}_{N,M}(x,y)=\displaystyle\sum_{n,m=1}^{N,M}\phi^{d}_{n,m}(\norm{x})\phi^{d}_{n,m}(\norm{y})\sum_{k=1}^{d(m)}Y_k^{(m)}(\frac{x}{\norm{x}})Y_k^{(m)}(\frac{y}{\norm{y}}).\end{equation}
 For more details, let's write \eqref{2} on the other form
 \begin{equation}\label{3}
 \sum_{m=1}^{M}\sum_{k=1}^{d(m)}Y_k^{(m)}(\frac{x}{\norm{x}})\sum_{n=1}^NC^k_{n,m}(f)\phi_{n,m}^d(\norm{x})=\lambda f(x),\,\,x\in\mathbb{B}^d,
 \end{equation}
 where $C^k_{n,m}(f)=\displaystyle\int_{\mathcal{K}}\phi^{d}_{n,m}(\norm{y})Y_k^{(m)}(\frac{y}{\norm{y}})f(y)dy.$ It's clear that from the previous equation we have $f\in\mbox{ Span }\{\Phi_{n,m,}^d, 1\leq n,m\leq N,M\}$, then $f$ have an expansion series given by $$f(x)=\sum_{m=1}^{M}\sum_{k=1}^{d(m)}Y_k^{(m)}(\frac{x}{\norm{x}})\sum_{n=1}^N\delta^k_{n,m}(f)\phi_{n,m}^d(\norm{x}),\,\,x\in\mathbb{B}^d,$$
 where $\delta^k_{n,m}(f)=\displaystyle\int_{\mathcal{B}^d}\phi^{d}_{n,m}(\norm{y})Y_k^{(m)}(\frac{y}{\norm{y}})f(y)dy.$ Hence, we deduce that if we denoted by $G^N_{m,k}(r)=\sum_{n=1}^N\delta^k_{n,m}(f)\phi_{n,m}^d(r)$, we obtain the following equality
 \begin{equation}\label{4}
 f(x)=\sum_{m=1}^{M}\sum_{k=1}^{d(m)}G^N_{m,k}(\norm{x})Y_k^{(m)}(\frac{x}{\norm{x}}),\,\,x\in\mathbb{B}^d.
 \end{equation}
 Let $x=R\xi$ and $y=r\xi'$, $\xi, \xi'\in\mathbb{S}^{d-1},$ then we rewrite \eqref{2} under the following form
 \begin{equation}
 \int_0^{\omega}r^{d-1}\int_{\mathbb{S}^{d-1}}K^{(d)}_{N,M}(R\xi,r\xi')f(r\xi')d\sigma(\xi')dr=\lambda f(R\xi),\,\,R\in(0,1).
 \end{equation}
 Using \eqref{4} and the previous equation, we get
 \begin{equation}\label{5}
 \sum_{m=1}^{M}\sum_{k=1}^{d(m)}\int_0^{\omega}r^{d-1}G^N_{m,k}(r)\int_{\mathbb{S}^{d-1}}K^{(d)}_{N,M}(R\xi,r\xi')Y_k^{(m)}(\xi')d\sigma(\xi')dr=\lambda\sum_{m=1}^{M}\sum_{k=1}^{d(m)}G^N_{m,k}(R)Y_k^{(m)}(\xi).
 \end{equation}
 On the other hand, from \eqref{e.4}, we have
 \begin{eqnarray}\label{6}
 \int_{\mathbb{S}^{d-1}}K^{(d)}_{N,M}(R\xi,r\xi')Y_k^{(m)}(\xi')d\sigma(\xi')&=&\sum_{i,j=1}^{N,M}\phi^{d}_{i,j}(R)\phi^{d}_{i,j}(r)\sum_{\ell=1}^{d(j)}Y_{\ell}^{(j)}(\xi)\int_{\mathbb{S}^{d-1}}Y_{\ell}^{(j)}(\xi')Y_k^{(m)}(\xi')d\sigma(\xi')\nonumber\\&=&\sum_{i=1}^{N}\phi^{d}_{i,m}(R)\phi^{d}_{i,m}(r)Y_{k}^{(m)}(\xi)
 \end{eqnarray}
 Finally, if we replace \eqref{6} in \eqref{5}, we get the following result
 \begin{equation*}
 \sum_{m=1}^{M}\sum_{k=1}^{d(m)}\int_0^{\omega}r^{d-1}G^N_{m,k}(r)\sum_{i=1}^{N}\phi^{d}_{i,m}(R)\phi^{d}_{i,m}(r)drY_{k}^{(m)}(\xi)=\lambda\sum_{m=1}^{M}\sum_{k=1}^{d(m)}G^N_{m,k}(R)Y_k^{(m)}(\xi).
 \end{equation*}
 Consequently, if we identify the radial part from both sides we deduce that for every $1\leq m\leq M$ and $1\leq k\leq d(m)$, we have
 \begin{equation}\label{7}
 \int_0^{\omega}r^{d-1}G^N_{m,k}(r)\sum_{i=1}^{N}\phi^{d}_{i,m}(R)\phi^{d}_{i,m}(r)dr=\tilde{\lambda}^{N}_{m}(\omega)G^N_{m,k}(R),\,\,0\leq R\leq 1,
 \end{equation}
 from which it is seen that $G^N_{m,k}$ is independent of $k.$
 In the sequel, we consider the new orthonormal basis of $L^2(0,1)$ such that for every $\alpha=m+\frac{d-2}{2}\geq -1/2$, we have $$\phi^{(\alpha)}_{n}(r)=r^{\frac{d-1}{2}}\phi^{d}_{n,m}(r)=\frac{\sqrt{2r}J_{\alpha}(s_n^{(\alpha)}r)}{\left|J_{\alpha+1}(s_n^{(\alpha)})\right|},\,\,0\leq r\leq 1.$$
 They are the eigenfunctions of the positive self adjoint differential operator $(-\mathcal{L})$ defined on $C^2(0,1)$ by  $(-\mathcal{L})(V)=-V''-\left(\frac{1/4-\alpha^2}{x^2}\right)V$ with associated eigenvalues $\left((s_j^{(\alpha)})^2\right)_{j\geq1}$.
 Finally, we consider the finite rank self adjoint operator $\tilde{\mathcal{Q}}^{\alpha}_{N,\omega}$ defined on $L^2(0,1)$ with associated kernel given by $\tilde{K}^{(\alpha)}_{N,\omega}(x,y)=\sum_{i=1}^{N}\phi^{(\alpha)}_{i}(x)\phi^{(\alpha)}_{i}(y)$, then we set $\varphi_{n,N}^{(\alpha)}$ the associated eigenfunctions called discrete Hankel prolate spheroidal wave functions {\bf (DHPSWF)} and $\tilde{\lambda}^{N}_{n,\alpha}(\omega)$ the associated eigenvalues. Hence, from \eqref{7}, we have
 \begin{equation}\label{11}
 \int_0^{\omega}\tilde{K}^{(\alpha)}_{N,\omega}(x,y)\varphi_{n,N}^{(\alpha)}(y)dy=\tilde{\lambda}^{N}_{n,\alpha}(\omega)\varphi_{n,N}^{(\alpha)}(x),\,\,0\leq x\leq 1,\,\,0\leq n\leq N-1.
 \end{equation}
 We should mention here that from the previous details, the family $\varphi_{n,N}^{(\alpha)}$ form an orthonormal system of $L^2(0,1)$ and they satisfy the following bi-orthogonal relation
 \begin{equation}\label{8}
 \int_0^{\omega}\varphi_{n,N}^{(\alpha)}(y)\varphi_{m,N}^{(\alpha)}(y)dy=\tilde{\lambda}^{N}_{n,\alpha}(\omega)\int_0^{1}\varphi_{n,N}^{(\alpha)}(y)\varphi_{m,N}^{(\alpha)}(y)dy=\tilde{\lambda}^{N}_{n,\alpha}(\omega)\delta_{n,m},\,\,0\leq n,m\leq N-1.
 \end{equation}
 \subsection{Discrete Hankel Prolate spheroidal sequence}
From the general principle section, the discrete Hankel prolate spheroidal sequences {\bf DHPSSs} denoted by $\left(x^N_n\right)_{0\leq n\leq N-1}$ are solution of energy maximization problem given in \eqref{e0} over the Hankel Paley-Wiener space
 \begin{equation*}
 \mathcal{B}_{\omega}=\{x\in \ell^2(\mathbb{C}),\,\mbox{ Support }(\hat{x})\subset[0,\omega] \},
 \end{equation*}
 where $\hat{x}=\sum_{k=1}^{\infty}x_k\phi^{(\alpha)}_{k}.$ They are the best essentially index over $[|1,...,N|]$ and band-limited amplitude with bandwidth $\omega>0$ with associated index-restriction over $[|1,...,N|]$ are the best essentially frequency(Fourier Bessel series).
 The {\bf DHPSSs} have a connection with {\bf DPSWFs}, that is for every $0\leq n\leq N-1,$ we have
 \begin{equation}\label{9}
 \varphi_{n,N}^{(\alpha)}(x)=\displaystyle\sum_{k=1}^{N}x_{k,n}^N\phi^{(\alpha)}_{k}(x),\,\, x\in [0,1],
 \end{equation}
 where $x^N_{k,n}=\displaystyle\int_0^1\varphi_{n,N}^{(\alpha)}(x)\phi^{(\alpha)}_{k}(x)dx.$ Furthermore, by replacing \eqref{9} in the right side of \eqref{11} and we identify the two expansion coefficients, we get
 \begin{equation}\label{10} \int_0^{\omega}\varphi_{n,N}^{(\alpha)}(y)\phi^{(\alpha)}_{k}(y)dy=\tilde{\lambda}^{N}_{n,\alpha}(\omega)x_{k,n}^N,\,1\leq k\leq N.\end{equation} Hence, if we replace \eqref{9} in \eqref{10}, we can easily check that the truncated of each {\bf DHPSS} over $[|1,...,N|]$ are solution of the following system. For every $0\leq n\leq N-1$, we have
 \begin{equation}
 \sum_{j=1}^{N}\frac{2K^{\alpha}_{\omega}(s_j^{(\alpha)},s_k^{(\alpha)})}{\sqrt{s_j^{(\alpha)}}|J_{\alpha+1}(s_j^{(\alpha)})|\sqrt{s_k^{(\alpha)}}|J_{\alpha+1}(s_k^{(\alpha)})|}x_{j,n}^N=\tilde{\lambda}^{N}_{n,\alpha}(\omega)x_{k,n}^N,\,\,1\leq k\leq N,
 \end{equation}
 where $K^{\alpha}_{\omega}$ is the kernel of the integral operator $\mathcal{Q}_{\omega}^{\alpha}$ associated with the continuous case given in the next section \eqref{13}. Consequently, we consider the $N$ vectors obtained by truncating each {\bf DHPSS} to the index set $[|1,...,N|]$ denoted by $\left(V_n^{N}\right)_{0\leq n\leq N-1}$  such that $V_n^{N}=\left(x_{1,n}^N,...,x_{N,n}^N\right)^T$, then they are the $N$ eigenvectors of the Matrix
 \begin{equation}
 \rho_{N,\omega}^{\alpha}=\begin{pmatrix}
 \frac{2K^{\alpha}_{\omega}(s_j^{(\alpha)},s_k^{(\alpha)})}{\sqrt{s_j^{(\alpha)}}|J_{\alpha+1}(s_j^{(\alpha)})|\sqrt{s_k^{(\alpha)}}|J_{\alpha+1}(s_k^{(\alpha)})|}
 \end{pmatrix}_{1\leq j,k\leq N}
 \end{equation} 
 Finally,  we should mention that from \eqref{8} and \eqref{9}, {\bf DHPSS} obey an orthogonality law given by the following rule
 \begin{equation}
 \displaystyle\sum_{k=1}^{N}x_{k,n}^Nx_{k,m}^N=\tilde{\lambda}^{N}_{n,\alpha}(\omega)\sum_{k=1}^{\infty}x_{k,n}^Nx_{k,m}^N=\delta_{n,m}.
 \end{equation}
 \subsection{Hankel Prolate Spheroidal wave functions: Continuous case}
 We recall that for a given values of $\alpha\geq-\frac{1}{2} $ and $c>0$, Hankel prolate spheroidal wave functions called also circular prolate (HPSWFs), denoted by $\varphi^{\alpha}_{n,c}$, have been discovered by an interesting paper \cite{Slepian3} given by D. Slepian. They are defined as the radial part of the classical multidimensional prolate defined on $L^2(\mathbb{B}^d),\,d=p+2,\,p\in\mathbb{N}$, where $\mathbb{B}^d$ is the unit disk of $\mathbb{R}^d.$ In the meantime, author have been showed that HPSWFs are the best essentially time and band-limited amplitude signals with bandwidth $c>0$ defined on $L^2(0,+\infty)$ with these restriction on $(0,1)$ are the most concentrated in frequency. Furthermore, HPSWFs are the different band-limited eigenfunctions of the finite Hankel transform $\mathcal{H}_c^{\alpha}$ defined on $L^2(0,1)$ with kernel $H_c^{\alpha}(x,y)=\sqrt{cxy}J_{\alpha}(cxy)$, here $J_{\alpha}$ is the Bessel function of the first type and order $\alpha>-\frac{1}{2}$, that is 
 \begin{equation}
 \mathcal{H}_c^{\alpha}(\varphi^{\alpha}_{n,c})=\mu_{n,\alpha}(c)\varphi^{\alpha}_{n,c}.
 \end{equation}
 To the operator $\mathcal{H}_c^{\alpha},$ we associate a positive, self-adjoint compact integral operator $\mathcal{Q}_c^{\alpha}=c\mathcal{H}_c^{\alpha}\mathcal{H}_c^{\alpha}$ defined on $L^2(0,1)$  with kernel $K_c^{\alpha}(x,y)=c\,G_{\alpha}(cx,cy),$ where
 \begin{equation}\label{13} 
 G_{\alpha}(x,y)=\begin{cases}\frac{\sqrt{xy}}{x^2-y^2}\left(xJ_{\alpha+1}(x)J_{\alpha}(y)-yJ_{\alpha+1}(y)J_{\alpha}(x)\right)& x\neq y\\ \frac{1}{2}\left(\left(xJ_{\alpha+1}(x)\right)'J_{\alpha}(x)-xJ_{\alpha+1}(x)J_{\alpha}'(x)\right)&  x=y\end{cases}
 \end{equation}
 We denote by $\lambda_{n}^{\alpha}(c)$ the infinite and countable sequence of the eigenvalues of the operator $\mathcal{Q}_c^{\alpha}$, that is
 $\lambda_{n}^{\alpha}(c)=c|\mu_{n}^{\alpha}(c)|^2.$
 In his pioneer work \cite{Slepian3}, D. Slepian has shown that the compact integral operator $\mathcal{H}_c^{\alpha}$ commutes with the following differential operator $\mathcal D^{\alpha}_c$ defined on $C^2([0,1])$ by
 
 \begin{equation}\label{differ_operator1}
 \mathcal D^{\alpha}_c (\phi)(x) = \dfrac{d}{dx} \left[ (1-x^2)\dfrac{d}{dx} \phi(x) \right]+\left(\dfrac{\dfrac{1}{4}-\alpha^2}{x^2}-c^2x^2\right)\phi(x).
 \end{equation}
 Hence,  $\varphi^{\alpha}_{n,c}$ is the $n-$th order bounded   eigenfunction of the operator $-\mathcal D^{\alpha}_c,$  associated with the eigenvalue $\chi_{n,\alpha}(c),$ that is
 \begin{equation}
 \label{differ_operator2}
 -\dfrac{d}{dx} \left[ (1-x^2)\dfrac{d}{dx} \varphi^{\alpha}_{n,c}(x) \right] - \left( \dfrac{\dfrac{1}{4}-\alpha^2}{x^2}-c^2x^2 \right)\varphi^{\alpha}_{n,c}(x)=\chi_{n,\alpha}(c)\varphi^{\alpha}_{n,c}(x),\quad x\in [0,1].
 \end{equation}
 The Hankel prolate functions form an orthonormal basis of $L^2(0,1).$ 
 Moreover, they form an orthogonal basis of the Hankel Paley Wiener space $\mathcal B_c^{\alpha}$, the space of functions from $L^2(0,\infty)$ with Hankel transforms supported on $[0,c]$, 
 \begin{equation}\label{eq0.5}
 \int_0^{\infty}\vp(y)\psi_{m,c}^{(\alpha)}(y)dy=\frac{1}{c(\mu_n^{\alpha}(c))^2}\delta_{n,m},~~~~~
 \mathcal H^{\alpha}(\vp)=\frac{1}{c\mu_{n,\alpha}(c)}\vp(\frac{.}{c})\chi_{[0,c]}.
 \end{equation}
 where $\mathcal H^{\alpha}$ is the Hankel transform defined on $L^2(0,\infty)$ with kernel $H^{\alpha}(x,y)=\sqrt{xy}J_{\alpha}(xy).$ For more details about circular prolate, we refer the readers to \cite{Slepian3}, \cite{Karoui-Boulsane}, \cite{B.M}, \cite{M.B} and \cite{B. J. S}. 
\section{Non-asymptotic behaviour of eigenvalue}
\begin{theorem}
	Let $0<\omega<1$, $\alpha>-1/2$ be two real numbers and $N\geq 1$ be an integer, then for every $\frac{e\omega s^{(\alpha)}_{N+1}}{4}\leq n\leq N-1 $, we have
	\begin{equation}
	\tilde{\lambda}_{n,\alpha}^N(c)\lesssim\sqrt{\frac{2}{e}}\frac{1}{\zeta}\frac{1}{\sqrt{(2N+\alpha+3/2)}}\left(\frac{e\omega s_{N+1}^{(\alpha)}}{4n+2\alpha+3}\right)^{2n+\alpha+1/2}.
	\end{equation}
	Here $\zeta=\frac{1}{2}\displaystyle\min_{1\leq i\leq N}\left\{\left|s_{i+1}^{(\alpha)}-s_i^{(\alpha)}\right|\right\}.$
\end{theorem}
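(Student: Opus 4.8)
The plan is to combine the bi-orthogonality relation \eqref{8} with the Courant–Fischer min–max principle and then reduce the problem to a pure energy-concentration estimate for functions vanishing to high order at the origin. First I would record the variational meaning of the eigenvalues. Since $\left(\varphi_{n,N}^{(\alpha)}\right)$ is an orthonormal system of $L^2(0,1)$ spanning $E_N=\mathrm{span}\{\phi^{(\alpha)}_1,\dots,\phi^{(\alpha)}_N\}$, a short computation with the degenerate kernel $\tilde K^{(\alpha)}_{N,\omega}$ of \eqref{11} and the orthonormality $\int_0^1\phi^{(\alpha)}_i\phi^{(\alpha)}_j=\delta_{ij}$ shows that for every $\psi=\sum_{k=1}^N c_k\phi^{(\alpha)}_k\in E_N$ one has $\langle\tilde{\mathcal Q}^{\alpha}_{N,\omega}\psi,\psi\rangle=\int_0^\omega|\psi(x)|^2\,dx$ while $\|\psi\|_{L^2(0,1)}^2=\sum_k|c_k|^2$. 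In particular $\tilde\lambda^N_{n,\alpha}(\omega)=\int_0^\omega|\varphi_{n,N}^{(\alpha)}(x)|^2\,dx$, so the eigenvalues are exactly the energy concentrations on $(0,\omega)$, and by min–max
\[
\tilde\lambda^N_{n,\alpha}(\omega)\le\max\Big\{\int_0^\omega|\psi|^2:\ \psi\in W,\ \|\psi\|_{L^2(0,1)}=1\Big\}
\]
for any subspace $W\subset E_N$ of codimension $n$.

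Second, I would choose $W$ so as to force a high order of vanishing at $0$. Writing $\phi^{(\alpha)}_k(x)=\sqrt2\sum_{m\ge0}\frac{(-1)^m(s_k^{(\alpha)}/2)^{2m+\alpha}}{m!\,\Gamma(m+\alpha+1)\,|J_{\alpha+1}(s_k^{(\alpha)})|}x^{2m+\alpha+1/2}$, the map sending $\psi\in E_N$ to its first $n$ Taylor coefficients $(a_0,\dots,a_{n-1})$ is linear of rank at most $n$; taking $W$ to be its kernel gives $\dim W\ge N-n$, and every $\psi\in W$ has the form $\psi(x)=\sum_{m\ge n}a_m x^{2m+\alpha+1/2}$, i.e. it vanishes to order $2n+\alpha+1/2$.

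Third, I would estimate $\int_0^\omega|\psi|^2$ for such $\psi$. The hypothesis $n\ge \frac{e\omega s_{N+1}^{(\alpha)}}{4}$ makes the ratio of consecutive terms on $(0,\omega)$ smaller than $1$, so the series is controlled geometrically by its $m=n$ term. To keep a single power $2n+\alpha+1/2$ rather than its square I would use $\int_0^\omega|\psi|^2\le \sqrt\omega\,\|\psi\|_{L^\infty(0,\omega)}\,\|\psi\|_{L^2(0,1)}$ and then bound the leading coefficient $a_n$ by Cauchy–Schwarz together with $\sum_k|c_k|^2=1$. Converting the resulting sum over the zeros, $\sum_k (s_k^{(\alpha)})^{2n+\alpha}/|J_{\alpha+1}(s_k^{(\alpha)})|^2$, into an integral through a quadrature estimate is where the separation constant $\zeta=\frac12\min_i|s_{i+1}^{(\alpha)}-s_i^{(\alpha)}|$ enters, while the number of active modes and the asymptotics $s_n^{(\alpha)}=\pi n+\frac\pi2(\alpha-\frac12)+O(1/n)$ produce the factor $1/\sqrt{2N+\alpha+3/2}$. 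Applying Stirling's formula to the Gamma factors would then give the closed form $\left(\frac{e\omega s_{N+1}^{(\alpha)}}{4n+2\alpha+3}\right)^{2n+\alpha+1/2}$ and the numerical constant $\sqrt{2/e}$.

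The main obstacle is the sharpness of the third step. A crude triangle-inequality bound of the alternating Bessel series overestimates $|\psi|$ on $(0,\omega)$—it effectively replaces $J_\alpha$ by the monotone $I_\alpha$—and degrades the denominator $4n+2\alpha+3$ by a factor of about $2$ in the base, which is fatal at the exponent $\sim 2n$. Obtaining the stated bound requires exploiting the cancellation in $\sum_m(-1)^m(\cdot)$, equivalently the genuine decay of $J_\alpha$, so that the surviving Gamma factor has argument of size $2n$ rather than the product $n!\,\Gamma(n+\alpha+1)$; controlling this cleanly, together with the exact passage from the discrete sum over Bessel zeros to an integral so that $\zeta$ and $\sqrt{2N+\alpha+3/2}$ emerge with the right constants, is the delicate part. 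Everything else is the standard min–max reduction and Stirling asymptotics.
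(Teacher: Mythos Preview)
Your outline is sound through the second step, and you correctly locate the obstruction: working with the raw Taylor expansion of $\psi\in W$ at the origin, the leading coefficient $a_n$ carries only $n!\,\Gamma(n+\alpha+1)$ in the denominator, so after Stirling the base of the exponential comes out as $e\omega s_{N+1}^{(\alpha)}/(2n+O(1))$ rather than $e\omega s_{N+1}^{(\alpha)}/(4n+2\alpha+3)$. That factor of~$2$ in the base is indeed fatal at exponent $\sim 2n$, and you do not propose a concrete mechanism to recover the cancellation; as written, the third step is a gap, not a proof.

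The paper closes this gap not by extracting cancellation from the power series but by changing the trial subspace in the min--max. It works on $L^2(0,\omega)$ with the orthonormal Jacobi basis
\[
T^{\alpha}_{k,\omega}(x)=\sqrt{\tfrac{2(2k+\alpha+1)}{\omega}}\,(x/\omega)^{\alpha+1/2}\,P_k^{(\alpha,0)}\bigl(1-2(x/\omega)^2\bigr),
\]
and takes $S_n=\mathrm{span}\{T^{\alpha}_{0,\omega},\dots,T^{\alpha}_{n-1,\omega}\}$. The decisive input is the Hankel identity $\mathcal H^{\alpha}(j^{\alpha}_{k,\omega})=\omega^{-1/2}T^{\alpha}_{k,\omega}\chi_{(0,\omega)}$ with $j^{\alpha}_{k,\omega}(x)=\sqrt{2(2k+\alpha+1)}\,J_{2k+\alpha+1}(\omega x)/\sqrt{\omega x}$, which yields the closed overlap
\[
\int_0^{\omega} T^{\alpha}_{k,\omega}(y)\,\phi^{(\alpha)}_j(y)\,dy
=\sqrt{2\omega}\,\frac{j^{\alpha}_{k,\omega}(s_j^{(\alpha)})}{\sqrt{s_j^{(\alpha)}}\,|J_{\alpha+1}(s_j^{(\alpha)})|}.
\]
So $\tilde{\mathcal Q}^{\alpha}_{N,\omega}(T^{\alpha}_{k,\omega})$ is controlled by a \emph{single} Bessel function $J_{2k+\alpha+1}(\omega s_j^{(\alpha)})$ of order $2k+\alpha+1$, and the elementary bound $|J_\nu(x)|\le (x/2)^\nu/\Gamma(\nu+1)$ places $\Gamma(2k+\alpha+2)$---one Gamma of argument $\sim 2k$, not a product of two Gammas of argument $\sim k$---in the denominator. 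Stirling (via the Batir inequality) then gives the correct base $(4k+2\alpha+3)/e$. The factor $\zeta^{-1}$ enters through the Riemann-sum comparison for $\sum_{j\le N}(s_j^{(\alpha)})^{2k+\alpha+1/2}$, as you anticipated; the factor $(2N+\alpha+3/2)^{-1/2}$ is the Stirling prefactor, not a mode count.

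In short: the missing idea is to replace ``kill Taylor coefficients'' by ``project onto the Jacobi basis of $L^2(0,\omega)$'', so that the required cancellation is already packaged inside $J_{2k+\alpha+1}$.
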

\begin{proof}
	Let's first give an explicit formula for $\tilde{\lambda}_{n,\alpha}^N(\omega)$ using the min-max theorem concerning the different positive eigenvalues associated to a self adjoint compact operator. More precisely, from Courant–Fischer–Weyl Min-Max variational principle,we have $$\tilde{\lambda}_{n,\alpha}^N(\omega)=\min_{S_n}\max_{f\in S_n^{\perp},\norm{f}=1}\scal{\tilde{\mathcal{Q}}^{\alpha}_{N,\omega}(f), f}_{L^2(0,\omega)}$$
	where $S_n$ is a subspace of $L^2(0,\omega)$ $n$-dimensional.
	 On the other hand, it's knowing that the family  $$T_{n,\omega}^{\alpha}(x)=\sqrt{\frac{2(2n+\alpha+1)}{\omega}}\left(\frac{x}{\omega}\right)^{\alpha+1/2}P_n^{(\alpha,0)}(1-2\left(\frac{x}{\omega}\right)^2),\,x\in(0,\omega),$$  is an orthonormal basis of $L^2(0,\omega)$,
	here $P_n^{(\alpha,0)}$ is the Jacobi polynomial of the first kind with order $(\alpha,0).$ The most important property of modified Jacobi polynomial is its relation to the Hankel transform $\mathcal{H}^{\alpha}$ defined in $L^2(0,\infty)$ with kernel $H(x,y)=\sqrt{xy}J_{\alpha}(xy)$, that is for every $n\geq 0,$ we have
	\begin{equation}\label{e2}
	\mathcal{H}^{\alpha}(j_{n,\omega}^{\alpha})=\frac{1}{\sqrt{\omega}}T_{n,\omega}^\alpha\,\chi_{(0,\omega)},
	\end{equation}
	where $j_{n,\omega}^\alpha$ is the the Spherical Bessel function defined by $$j^{\alpha}_{n,\omega}(x)=\sqrt{2(2n+\alpha+1)}\frac{J_{2n+\alpha+1}(\omega x)}{\sqrt{\omega x}},\,x\in(0,+\infty),$$ here $J_\beta$ is the Bessel function of the first kind with order $\beta>-1/2.$ Consequently, we conclude that the Spherical Bessel family form a complete orthonormal system of the Paley-Wiener space $\mathcal{H}B_{\omega}^\alpha$ associated to the Hankel transform defined by 
	\begin{equation}
	\mathcal{H}B_{\omega}^\alpha=\left \{ f\in L^2(0,\infty),\,\, \mbox{Support} \mathcal{H}^{\alpha}f\subseteq [0,\omega] \right\}.
	\end{equation}
	 We consider $S_n$ the span of $\{T^\alpha_{0,\omega},...,T^\alpha_{n-1,\omega}\}$ and $f\in S_n^{\perp}$ that is $f=\sum_{k\geq n}a_k(f)T^\alpha_{k,\omega}$ and $\norm{f}^2_{L^2(0,\omega)}=\sum_{k=n}^{+\infty}|a_k(f)|^2=1.$ Let's first give an explicit form for $	\tilde{\mathcal{Q}}^{\alpha}_{N,\omega}(T^\alpha_{k,\omega})$. Indeed, by using \eqref{e2}, we get that for every $k\geq 0$ and $x\geq 0,$
	\begin{eqnarray*}
	\tilde{\mathcal{Q}}^{\alpha}_{N,\omega}(T^\alpha_{k,\omega})(x)&=&\int_0^\omega\sum_{j=1}^{N}\phi^{(\alpha)}_j(x)\phi^{(\alpha)}_j(y)T^{\alpha}_{k,\omega}(y)dy\\&=&\sum_{j=1}^{N}\phi^{(\alpha)}_j(x)\int_0^{\omega}T^{\alpha}_{k,\omega}(y)\phi^\alpha_{j}(y)dy\\&=&\sqrt{2\omega}\sum_{j=1}^{N}\frac{\phi^{(\alpha)}_j(x)}{\sqrt{s_j^{(\alpha)}}|J_{\alpha+1}(s_j^{(\alpha)})|}\int_0^{+\infty}\sqrt{s_j^{(\alpha)}y}J_{\alpha}(s_j^{(\alpha)}y)\frac{1}{\sqrt{\omega}}T^{\alpha}_{k,\omega}( y)\chi_{(0,\omega)}(y)dy\\&=&\sqrt{2\omega}\sum_{j=1}^{N}\frac{j^{\alpha}_{k,\omega}(s_j^{(\alpha)})}{\sqrt{s_j^{(\alpha)}}|J_{\alpha+1}(s_j^{(\alpha)})|}\phi^{(\alpha)}_j(x).
	\end{eqnarray*}
From the Cauchy Schwartz inequality and taking into account that $(\phi^{(\alpha)}_j)$ is an orthonormal basis of $L^2(0,1)$, we have
\begin{eqnarray*}
\norm{\tilde{\mathcal{Q}}^{\alpha}_{N,\omega}(T^{\alpha}_{k,\omega})}_{L^2(0,\omega)}&\leq&2\omega\sum_{j=1}^{N}\frac{\left|j^{\alpha}_{k,\omega}(s_j^{(\alpha)})\right|}{\sqrt{s_j^{(\alpha)}}|J_{\alpha+1}(s_j^{(\alpha)})|}.
\end{eqnarray*}
Let $\zeta=\frac{1}{2}\displaystyle\min_{1\leq i\leq N}\left\{\left|s_{i+1}^{(\alpha)}-s_i^{(\alpha)}\right|\right\}$. By using Batir inequality given in \cite{B.N} and the famous inequality given by $$|J_{\alpha}(x)|\leq\frac{|\frac{x}{2}|^\alpha}{\Gamma(\alpha+1)},\,x\in\mathbb{R}$$ with $\left|\sqrt{s_j^{(\alpha)}}J_{\alpha+1}(s_j^{(\alpha)})\right|^{-1}=O(1),$ see \cite{Watson}, we obtain
\begin{eqnarray*}
\norm{\tilde{\mathcal{Q}}^{\alpha}_{N,\omega}(T^{\alpha}_{k,\omega})}_{L^2(0,\omega)}&\lesssim&2\omega\sqrt{2(2k+\alpha+1)}\sum_{j=1}^{N}\left|\frac{J_{2k+\alpha+1}(\omega s_j^{(\alpha)})}{\sqrt{\omega s_j^{(\alpha)}}}\right|\\&\lesssim&2\omega\sqrt{2(2k+\alpha+1)}\sum_{j=1}^{N}\left|\frac{(\omega s_j^{(\alpha)})^{2k+\alpha+1/2}}{2^{2k+\alpha+1}\Gamma(2k+\alpha+2)}\right|\\&\lesssim&2\frac{\sqrt{(2k+\alpha+1)}}{\Gamma(2k+\alpha+2)}\left(\frac{\omega }{2}\right)^{(2k+\alpha+3/2)}\frac{1}{\zeta}\sum_{j=1}^{N}(s_{j+1}^{(\alpha)}-s_{j}^{(\alpha)})\left(s_j^{(\alpha)}\right)^{(2k+\alpha+1/2)}\\&\lesssim&2\frac{1}{\zeta}\frac{\sqrt{(2k+\alpha+1)}}{(2k+\alpha+3/2)\Gamma(2k+\alpha+2)}\left(\frac{\omega s_{N+1}^{(\alpha)} }{2}\right)^{(2k+\alpha+3/2)}\\&\lesssim&\frac{1}{\zeta}\frac{\sqrt{2}}{\sqrt{e(2k+\alpha+3/2)}}\left(\frac{e\omega s_{N+1}^{(\alpha)}}{2(2k+\alpha+3/2)}\right)^{(2k+\alpha+3/2)}
\end{eqnarray*}
Finally, from the series expansion of $f\in S_n^{\perp}$, that is $f=\sum_{k\geq n}a_k(f)T^{\alpha}_{k,\omega}$ and by using the Cauchy-Schwartz inequality, one gets
\begin{eqnarray*}
\left|\scal{\tilde{\mathcal{Q}}^{\alpha}_{N,\omega}(f), f}_{L^2(0,\omega)}\right|&\leq&\sum_{k\geq n}|a_k(f)| \left|\scal{\tilde{\mathcal{Q}}^{\alpha}_{N,c}(T^{\alpha}_{k,\omega}), f}_{L^2(0,\omega)}\right|\\&\leq&\sum_{k\geq n}\norm{\tilde{\mathcal{Q}}^{\alpha}_{N,c}(T^{\alpha}_{k,\omega})}_{L^2(0,\omega)}\\&\lesssim&\frac{1}{\zeta}\frac{\sqrt{2}}{\sqrt{e(2N+\alpha+3/2)}}\sum_{k\geq n}\left(\frac{e\omega s_{N+1}^{(\alpha)}}{2(2k+\alpha+3/2)}\right)^{2k+\alpha+3/2}\\&\lesssim&\frac{1}{\zeta}\frac{\sqrt{2}}{\sqrt{e(2N+\alpha+3/2)}}\left(\frac{e\omega s_{N+1}^{(\alpha)}}{4n+2\alpha+3}\right)^{2n+\alpha+3/2}.
\end{eqnarray*}
Consequently, from the Courant–Fischer–Weyl Min-Max variational principle one gets the desired result.
\end{proof}
\begin{remark}
According to the given details from the energy maximisation problem \eqref{e0} and the Min-Max theorem, we can easily check that for every $0\leq n\leq N-1$ and $S_n$ subspace of $S_N$ with dimensional $n,$ we have
\begin{equation}\label{e3} \tilde{\lambda}_{n,\alpha}^N(\omega)=\begin{cases}\displaystyle\max_{x\in S_N}\frac{\|\hat{x}\|^2_{L^2([0,\omega])}}{\|\hat{x}\|^2_{L^2(0,1)}} & \mbox{ if } n=0\\\displaystyle\max_{S_n}\min_{x\in S_n}\frac{\|\hat{x}\|^2_{L^2([0,\omega])}}{\|\hat{x}\|^2_{L^2(0,1)}} & \mbox{ if } n\geq 1\end{cases}. \end{equation}	
\end{remark}
The following proposition and its proof are inspired from the similar theorem given in \cite{B.B.K} in the case of the classical discrete eigenvalues. However, the following proposition give us more precision around the decay quality of the eigenvalues $(\tilde{\lambda}_{n,\alpha}^N(\omega))$ using a simple comparison with the decay rate of eigenvalues $(\lambda_{n}^{\alpha}(c))$ associated to the integral transform $\mathcal{Q}_c^{\alpha}$ given in \cite{M.B}.
\begin{proposition}
	Let $0<\omega<1$, $\alpha>0$ be two real numbers and $N\geq 1$ be an integer. We denote $\zeta=\displaystyle\frac{1}{2}\min_{1\leq i\leq N-1}\left\{\left|s_{i+1}^{(\alpha)}-s_i^{(\alpha)}\right|\right\}$ and   $\varepsilon=\min\{\zeta,\,s_1^{(\alpha)}\}$, then for every $0\leq n\leq N-1 $, we have
	\begin{equation}
	\left(C'_{\alpha,1}(\omega)\right)^2 \lambda_{n}^{\alpha}\left(c\right)\leq\tilde{\lambda}_{n,\alpha}^N(\omega)\leq \left(C'_{\alpha,2}(\omega)\right)^2 \lambda_{n}^{\alpha}\left(c\right),
	\end{equation}
	where $c=\omega (s_N^{(\alpha)}+\varepsilon),$  $C'_{\alpha,1}(\omega)=\frac{2}{\sqrt{\varepsilon}}\frac{\Gamma(\alpha+1)}{\Gamma(\alpha+1/2)}m_\alpha$ and $C'_{\alpha,2}(\omega)=\frac{1}{\sqrt{\varepsilon}}\frac{\left(\varepsilon\omega\right)^{\alpha}}{J_{\alpha(\varepsilon\alpha)}}M_\alpha.$ Here $J_\alpha$ is the Bessel function of the first kind with order $\alpha$ and \begin{equation*} \begin{cases}M_\alpha=2^{(2\alpha-1/2)}\chi_{\{\alpha\geq 1/2\}}(\alpha)+\frac{2^{2-5\alpha}}{\sqrt{\alpha}}\chi_{\{0<\alpha<1/2\}}(\alpha)\\m_\alpha=\frac{2^{(2\alpha-1/2)}}{\sqrt{2\alpha}}\chi_{\{\alpha\geq 1/2\}}(\alpha)+2^{\alpha-1/2}\chi_{\{0<\alpha<1/2\}}(\alpha)\end{cases}.\end{equation*}
\end{proposition}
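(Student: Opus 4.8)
The plan is to derive the two-sided estimate from the Courant--Fischer--Weyl Min--Max principle, exactly in the spirit of the classical discrete case \cite{B.B.K}, by comparing the two relevant Rayleigh quotients. On the discrete side I use the variational formula recorded in the Remark, $\tilde{\lambda}_{n,\alpha}^N(\omega)=\max_{S_n}\min_{\hat x\in S_n}\norm{\hat x}^2_{L^2(0,\omega)}/\norm{\hat x}^2_{L^2(0,1)}$, where $S_n$ ranges over the subspaces of $\mathrm{span}\{\phi^{(\alpha)}_1,\dots,\phi^{(\alpha)}_N\}$ indicated there and $\norm{\hat x}^2_{L^2(0,1)}=\sum_{k=1}^N|x_k|^2$. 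On the continuous side I use that $\lambda_n^{\alpha}(c)$ is the corresponding concentration eigenvalue of band-limited functions, i.e. the analogous $\max\min$ of $\norm{\psi}^2_{L^2(0,1)}/\norm{\psi}^2_{L^2(0,\infty)}$ over $\psi$ in the Hankel Paley--Wiener space $\mathcal B_c^{\alpha}$.

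First I would align the two problems by a dilation. The map $(U_a\psi)(r)=\sqrt a\,\psi(ar)$ is unitary on $L^2(0,\infty)$ and intertwines $\mathcal H^{\alpha}$ with the inverse dilation, hence sends $\mathcal B_c^{\alpha}$ onto $\mathcal B_{ac}^{\alpha}$. Taking $a=1/\omega$ and $c=\omega(s_N^{(\alpha)}+\varepsilon)$ turns the band $[0,c]$ into $[0,s_N^{(\alpha)}+\varepsilon]$, and since $\norm{U_{1/\omega}\psi}^2_{L^2(0,\omega)}=\norm{\psi}^2_{L^2(0,1)}$ it identifies $\lambda_n^{\alpha}(c)$ with $\max\min\norm{g}^2_{L^2(0,\omega)}/\norm{g}^2_{L^2(0,\infty)}$ taken over $g\in\mathcal B_{\sigma}^{\alpha}$ with $\sigma:=s_N^{(\alpha)}+\varepsilon$. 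After this reduction both eigenvalues are concentration ratios on the \emph{same} interval $(0,\omega)$; only the ambient total energy differs ($L^2(0,1)$ versus $L^2(0,\infty)$), together with the function class (the finite Fourier--Bessel span versus $\mathcal B_\sigma^{\alpha}$).

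The core step is then to construct an injective linear correspondence $\mathcal T$ from $\mathrm{span}\{\phi^{(\alpha)}_1,\dots,\phi^{(\alpha)}_N\}$ into $\mathcal B_{\sigma}^{\alpha}$ that preserves the numerator and controls the denominator two-sidedly. For this I would use that $\mathcal B_\sigma^{\alpha}$ is a reproducing-kernel space whose kernel is again expressed through $G_\alpha$, namely $\sigma\,G_\alpha(\sigma x,\sigma y)=\int_0^{\sigma}\sqrt{xt}J_\alpha(xt)\sqrt{yt}J_\alpha(yt)\,dt$, so that the Hankel ``sinc'' functions attached to the nodes $s_k^{(\alpha)}/\sigma\in(0,1)$ form an orthogonal sampling basis of $\mathcal B_\sigma^{\alpha}$. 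Matching $\hat x=\sum_{k}x_k\phi^{(\alpha)}_k$ to the band-limited function that shares its values on $(0,\omega)$ keeps $\norm{\mathcal T\hat x}_{L^2(0,\omega)}=\norm{\hat x}_{L^2(0,\omega)}$, while the key sandwich $A^2\sum_k|x_k|^2\le\norm{\mathcal T\hat x}^2_{L^2(0,\infty)}\le B^2\sum_k|x_k|^2$ reduces, through the sampling/Parseval identity for $\mathcal B_\sigma^{\alpha}$, to a Riesz-system estimate for the $N$ frequencies $s_1^{(\alpha)},\dots,s_N^{(\alpha)}$, all lying strictly below the band edge $\sigma$. The explicit constants $A=C'_{\alpha,1}(\omega)$ and $B=C'_{\alpha,2}(\omega)$ then come from the small-argument bound $|J_\alpha(z)|\le (z/2)^{\alpha}/\Gamma(\alpha+1)$ near the origin and the zero asymptotics $\sqrt{s_k^{(\alpha)}}\,|J_{\alpha+1}(s_k^{(\alpha)})|=O(1)$ at the nodes, which is exactly why the factors $(\varepsilon\omega)^{\alpha}$, $\Gamma(\alpha+1)/\Gamma(\alpha+1/2)$ and $1/\sqrt{\varepsilon}$ surface. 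Feeding $\tfrac1{B^2}\tilde\lambda_{n,\alpha}^N(\omega)\le\lambda_n^{\alpha}(c)\le\tfrac1{A^2}\tilde\lambda_{n,\alpha}^N(\omega)$ back through Courant--Fischer (the dimension of the extremal subspace is preserved because $\mathcal T$ is injective) gives the stated inequalities, the squares reflecting the quadratic nature of the Rayleigh quotient.

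I expect the genuine difficulty to be the lower frame bound $A=C'_{\alpha,1}(\omega)>0$: one must show that the band-limited functions attached to $s_1^{(\alpha)},\dots,s_N^{(\alpha)}$ remain uniformly linearly independent in $L^2(0,\infty)$ with a constant independent of $n$ and $N$. This is precisely where the strict margin $\varepsilon=\min\{\zeta,s_1^{(\alpha)}\}$ below the band edge $\sigma$ intervenes: the half-spacing $\zeta$ keeps the sampling nodes $s_k^{(\alpha)}/\sigma$ separated (so the Hankel-sinc system does not degenerate as $s_k^{(\alpha)}\to\sigma$), while the cutoff $s_1^{(\alpha)}$ keeps them away from the origin where the weights collapse; the upper bound $B$ is comparatively soft and follows directly from the pointwise Bessel estimates. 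Equivalently, one may phrase the whole comparison as a Nyström/Gauss--Bessel quadrature matching of the matrix $\rho_{N,\omega}^{\alpha}$ with $\mathcal Q_c^{\alpha}$ at the nodes $s_j^{(\alpha)}/\sigma$, but either way the only place that demands real care is controlling the mismatch between the finite Fourier--Bessel expansion on $(0,1)$ and its genuine $L^2(0,\infty)$ band-limited counterpart, with fully explicit $\alpha$-dependent constants.
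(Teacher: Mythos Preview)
Your overall architecture is right and matches the paper: reduce via dilation to concentration on $(0,\omega)$ with band $\sigma=s_N^{(\alpha)}+\varepsilon$, build a linear map from $\mathrm{span}\{\phi_1^{(\alpha)},\dots,\phi_N^{(\alpha)}\}$ into $\mathcal B_\sigma^{\alpha}$, bound the two Rayleigh quotients against each other, and conclude by Courant--Fischer. The gap is in the construction of $\mathcal T$ itself. You assert that $\mathcal T\hat x$ can be taken to \emph{coincide with $\hat x$ on $(0,\omega)$}, so that the numerators agree exactly, and then invoke a Hankel sampling/RKHS picture for the denominator. But the natural ``same values'' extension of $\hat x(r)=\sum_k x_k\,\sqrt{2r}\,J_\alpha(s_k^{(\alpha)}r)/|J_{\alpha+1}(s_k^{(\alpha)})|$ from $(0,1)$ to $(0,\infty)$ is just $\sum_k x_k\sqrt{2r}J_\alpha(s_k^{(\alpha)}r)/|J_{\alpha+1}(s_k^{(\alpha)})|$, which is \emph{not} in $L^2(0,\infty)$ (each summand behaves like a nonzero constant at infinity), hence not in $\mathcal B_\sigma^{\alpha}$. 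If instead you send $\phi_k^{(\alpha)}$ to the reproducing-kernel ``Hankel sinc'' at the node $s_k^{(\alpha)}/\sigma$, you do land in $\mathcal B_\sigma^{\alpha}$, but then there is no reason whatsoever for $\mathcal T\hat x$ to equal $\hat x$ on $(0,\omega)$, and you have given no argument for a two-sided comparison of the numerators. So as written, the step ``$\|\mathcal T\hat x\|_{L^2(0,\omega)}=\|\hat x\|_{L^2(0,\omega)}$'' is unjustified and in fact cannot hold for any linear $\mathcal T$ with range in $\mathcal B_\sigma^{\alpha}$.

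The paper closes this gap with a concrete multiplier rather than a sampling map: it sets
\[
f(y)=\hat x(y)\,\frac{J_\alpha(\varepsilon y)}{y^{\alpha}},\qquad \hat x=\sum_{j=1}^N a_j\phi_j^{(\alpha)},
\]
and uses Watson's triple-Bessel identity
\[
\int_0^{\infty} t^{1-\alpha}J_\alpha(at)J_\alpha(bt)J_\alpha(ct)\,dt
=\frac{2^{\alpha-1}A^{\alpha-1/2}}{\sqrt{\pi}\,(abc)^{\alpha}\Gamma(\alpha+1/2)}\,\chi_{(|b-c|,\,b+c)}(a)
\]
to compute $\mathcal H^{\alpha}\!\big(\phi_j^{(\alpha)}J_\alpha(\varepsilon\cdot)/(\cdot)^{\alpha}\big)$ explicitly, with support in $(s_j^{(\alpha)}-\varepsilon,\,s_j^{(\alpha)}+\varepsilon)$. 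The choice $\varepsilon\le\zeta$ makes these windows pairwise disjoint, so $\|f\|_{L^2(0,\infty)}^2=\sum_j|a_j|^2\|\mathcal H^{\alpha}(\phi_j^{(\alpha)}J_\alpha(\varepsilon\cdot)/(\cdot)^\alpha)\|_2^2$ exactly, and each individual norm is pinned between $m_\alpha\varepsilon^{\alpha-1/2}/\Gamma(\alpha+1/2)$ and $M_\alpha\varepsilon^{\alpha-1/2}/\Gamma(\alpha+1/2)$ by elementary estimates on the factor $A(x)$. For the numerator the paper does \emph{not} claim equality but uses that $y\mapsto (J_\alpha(\varepsilon y)/y^{\alpha})^2$ is monotone on $(0,\omega)$ (here $\varepsilon\le s_1^{(\alpha)}$ enters) to get
\[
\frac{J_\alpha^2(\varepsilon\omega)}{\omega^{2\alpha}}\,\|\hat x\|_{L^2(0,\omega)}^2\le\|f\|_{L^2(0,\omega)}^2\le\frac{\varepsilon^{2\alpha}}{2^{2\alpha}\Gamma(\alpha+1)^2}\,\|\hat x\|_{L^2(0,\omega)}^2.
\]
Combining these two-sided bounds on numerator and denominator and running Min--Max yields the stated constants. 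In short, the missing idea in your proposal is this pointwise multiplier together with Watson's formula; without it you have neither a bona fide element of $\mathcal B_\sigma^{\alpha}$ nor any control of the numerator.
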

\begin{proof}
Let $\varepsilon>0$, we denoted by $\mathcal{F}_N$ the span of the first $N$ modified Bessel functions $\{\phi^{(\alpha)}_{j}.\frac{J_{\alpha}(\varepsilon.)}{y^{\alpha}},\,1\leq j\leq N\}.$ From the famous formula given in \cite{Watson} by $$\int_0^{+\infty}tJ_{\alpha}(at)J_{\alpha}(bt)J_{\alpha}(ct)\frac{dt}{t^{\alpha}}=\frac{2^{\alpha-1}A^{\alpha-\frac{1}{2}}}{\pi^{1/2}(abc)^{\alpha}\Gamma(\alpha+1/2)}\chi_{(|b-c|,\,b+c)}(a),$$
where $A=s(s-a)(s-b)(s-c)$ and $s=\frac{a+b+c}{2},$ we have for $a=x, b=s_j^{(\alpha)}$ and $c=\varepsilon,$ $$\mathcal{H}^{\alpha}\left(\frac{J_{\alpha}(\varepsilon.)}{y^{\alpha}}\phi^{(\alpha)}_{j}\right)(x)=\frac{2^{\alpha-\frac{1}{2}}\sqrt{x}}{\pi^{1/2}\Gamma(\alpha+1/2)|J_{\alpha+1}(s_j^{(\alpha)})|}\frac{\left(A(x)\right)^{\alpha-\frac{1}{2}}}{(x.\varepsilon .s_j^{(\alpha)})^{\alpha}}\chi_{\left(|s_j^{(\alpha)}-\varepsilon|, \,s_j^{(\alpha)}+\varepsilon\right)}(x),$$ where $A(x)=\frac{\left((\varepsilon+s_j^{(\alpha)})^2-x^2\right)\left(x^2-(s_j^{(\alpha)}-\varepsilon)^2\right)}{2^4}.$ Consequently, the Hankel transform of every function $f\in\mathcal{F}_N$ has support in $[|s_1^{(\alpha)}-\varepsilon|,\,s_N^{(\alpha)}+\varepsilon]\subset[0,s_N^{(\alpha)}+\varepsilon].$ Hence, $\mathcal{F}_N$ is a subspace of the Hankel Paley-Wiener space $\mathcal{HB}_{s_N^{(\alpha)}+\varepsilon}^{\alpha}.$ On the other hand, from a straightforward computation, we can easily check that for every $\gamma>0$ and $X\in(A,B)$, we have 
$$\frac{(X^\gamma-A^\gamma)(B^\gamma-X^\gamma)}{(B^\gamma-A^\gamma)}\leq (\gamma\vee 1)X^\gamma\frac{\sqrt{B}-\sqrt{A}}{\sqrt{B}+\sqrt{A}},$$ 
hence, we get the following inequality
\begin{eqnarray}\label{e14} \frac{x\left((\varepsilon+s_j^{(\alpha)})-x\right)\left(x-(s_j^{(\alpha)}-\varepsilon)\right)}{2^3}\leq\left(\frac{A(x)}{x}\right)&\leq& \frac{1}{2}\frac{\sqrt{s_j^{(\alpha)}+\varepsilon}-\sqrt{|s_j^{(\alpha)}-\varepsilon|}}{\sqrt{s_j^{(\alpha)}+\varepsilon}+\sqrt{|s_j^{(\alpha)}-\varepsilon|}}(s_j^{(\alpha)}.\varepsilon)\,x\nonumber\\&\leq& \frac{1}{2}\frac{s_j^{(\alpha)}+\varepsilon-|s_j^{(\alpha)}-\varepsilon|}{\left(\sqrt{s_j^{(\alpha)}+\varepsilon}+\sqrt{|s_j^{(\alpha)}-\varepsilon|}\right)^2}(s_j^{(\alpha)}.\varepsilon)\,x\nonumber\\&\leq& \varepsilon\min{\{s_j^{(\alpha)},\,\varepsilon\}}\,x.
\end{eqnarray}
Consequently, for the case when $\alpha\geq 1/2$ and from the previous inequality, we deduce that
\begin{eqnarray*}
\norm{\mathcal{H}^{\alpha}\left(\frac{J_{\alpha}(\varepsilon.)}{y^{\varepsilon}}\phi^{(\alpha)}_{j}\right)}_{L^2(0,+\infty)}&=&\frac{2^{\alpha-\frac{1}{2}}}{\pi^{1/2}\Gamma(\alpha+1/2)|J_{\alpha+1}(s_j^{(\alpha)})|(\varepsilon .s_j^{(\alpha)})^{\alpha}}\left[\int_{|s_j^{(\alpha)}-\varepsilon|}^{s_j^{(\alpha)}+\varepsilon}\frac{\left(A(x)\right)^{2\alpha-1}}{x^{2\alpha-1}}dx\right]^{1/2}\\&\leq&\frac{\left(2\varepsilon\min{\{s_j^{(\alpha)},\,\varepsilon\}}\right)^{\alpha-1/2}}{\pi^{1/2}\Gamma(\alpha+1/2)|J_{\alpha+1}(s_j^{(\alpha)})|(\varepsilon .s_j^{(\alpha)})^{\alpha}}\left[\frac{\left(s_j^{(\alpha)}+\varepsilon\right)^{2\alpha}-\left(|s_j^{(\alpha)}-\varepsilon|\right)^{2\alpha}}{2\alpha}\right]^{1/2}\\&\leq&\frac{\left(2\varepsilon\min{\{s_j^{(\alpha)},\,\varepsilon\}}\right)^{\alpha-1/2}}{\pi^{1/2}\Gamma(\alpha+1/2)|J_{\alpha+1}(s_j^{(\alpha)})|(\varepsilon .s_j^{(\alpha)})^{\alpha}}\left(2\min{\{s_j^{(\alpha)},\,\varepsilon\}}(s_j^{(\alpha)}+\varepsilon)^{2\alpha-1}\right)^{1/2}.
\end{eqnarray*}
Let $0<\varepsilon<s_1^{(\alpha)}<..<s_j^{(\alpha)}..$ such that $\varepsilon<\zeta=\displaystyle\frac{1}{2}\min_{1\leq i\leq N-1}\left\{\left|s_{i+1}^{(\alpha)}-s_i^{(\alpha)}\right|\right\}$, then from the fact that $(s_j^{(\alpha)})^{-1/2}|J_{\alpha+1}(s_j^{(\alpha)})|^{-1}=\sqrt{\frac{\pi}{2}}(1+O(\frac{1}{j}))$, see \cite{Watson}, we obtain
\begin{eqnarray}\label{e-1}
\frac{2^{(2\alpha-1/2)}}{\sqrt{2\alpha}}\times\frac{\varepsilon^{\alpha-1/2}}{\Gamma(\alpha+1/2)}\leq\,\norm{\mathcal{H}^{\alpha}\left(\frac{J_{\alpha}(\varepsilon.)}{y^{\alpha}}\phi^{(\alpha)}_{j}\right)}_{L^2(0,+\infty)}&\leq& 2^{(2\alpha-1/2)}\times\frac{\varepsilon^{\alpha-1/2}}{\Gamma(\alpha+1/2)},
\end{eqnarray}
Let's focus on the case when $0<\alpha<1/2$. For $x\in\left(|\varepsilon-s_j^{(\alpha)}|,(\varepsilon+s_j^{(\alpha)})\right)$ and from \eqref{e14}, we have
\begin{eqnarray*}
\frac{1}{\varepsilon\min{\{s_j^{(\alpha)},\,\varepsilon\}}\,x}\leq\frac{x}{A(x)}&\leq& \frac{2^3}{|\varepsilon-s_j^{(\alpha)}|\max\{\varepsilon, s_j^{(\alpha)}\}}\left(\frac{x}{(\varepsilon+s_j^{(\alpha)}-x)(x-|\varepsilon-s_j^{(\alpha)}|)}\right)
\end{eqnarray*}
which leads to
\begin{eqnarray*}
\norm{\mathcal{H}^{\alpha}\left(\frac{J_{\alpha}(\varepsilon.)}{y^{\varepsilon}}\phi^{(\alpha)}_{j}\right)}_{L^2(0,+\infty)}&=&\frac{2^{\alpha-\frac{1}{2}}}{\pi^{1/2}\Gamma(\alpha+1/2)|J_{\alpha+1}(s_j^{(\alpha)})|(\varepsilon .s_j^{(\alpha)})^{\alpha}}\left[\int_{|s_j^{(\alpha)}-\varepsilon|}^{s_j^{(\alpha)}+\varepsilon}\left(\frac{x}{A(x)}\right)^{1-2\alpha}dx\right]^{1/2}\nonumber\\&=&\frac{2^{\alpha-\frac{1}{2}}}{\pi^{1/2}\Gamma(\alpha+1/2)|J_{\alpha+1}(s_j^{(\alpha)})|(\varepsilon .s_j^{(\alpha)})^{\alpha}}\left[\left(\int_{|s_j^{(\alpha)}-\varepsilon|}^{s_j^{(\alpha)}}+\int_{s_j^{(\alpha)}}^{s_j^{(\alpha)}+\varepsilon}\right)\left(\frac{x}{A(x)}\right)^{1-2\alpha}dx\right]\nonumber\\&\leq&\frac{2^{(1-2\alpha)}}{\Gamma(\alpha+1/2)\varepsilon^{\alpha}(s_j^{(\alpha)}-\varepsilon)^{1/2-\alpha}}\times \frac{(s_j^{(\alpha)}+\varepsilon)^{1/2-\alpha}}{\sqrt{\alpha}}\times \left(\frac{\varepsilon}{2}\right)^{2\alpha-1/2}\nonumber\\&\leq&\frac{2^{2-5\alpha}}{\sqrt{\alpha}}\times \frac{\varepsilon^{\alpha-1/2}}{\Gamma(\alpha+1/2)}.
\end{eqnarray*}
and 
\begin{eqnarray*}
\norm{\mathcal{H}^{\alpha}\left(\frac{J_{\alpha}(\varepsilon.)}{y^{\varepsilon}}\phi^{(\alpha)}_{j}\right)}_{L^2(0,+\infty)}&=&\frac{2^{\alpha-\frac{1}{2}}}{\pi^{1/2}\Gamma(\alpha+1/2)|J_{\alpha+1}(s_j^{(\alpha)})|(\varepsilon .s_j^{(\alpha)})^{\alpha}}\left[\int_{|s_j^{(\alpha)}-\varepsilon|}^{s_j^{(\alpha)}+\varepsilon}\left(\frac{x}{A(x)}\right)^{1-2\alpha}dx\right]^{1/2}\nonumber\nonumber\\&\geq&2^{\alpha-1/2}\times\frac{\varepsilon^{\alpha-1/2}}{\Gamma(\alpha+1/2)}.
\end{eqnarray*}
Hence, we get
\begin{equation}\label{e-2}
2^{\alpha-1/2}\times\frac{\varepsilon^{\alpha-1/2}}{\Gamma(\alpha+1/2)}\leq \norm{\mathcal{H}^{\alpha}\left(\frac{J_{\alpha}(\varepsilon.)}{y^{\varepsilon}}\phi^{(\alpha)}_{j}\right)}_{L^2(0,+\infty)}\leq \frac{2^{2-5\alpha}}{\sqrt{\alpha}}\times \frac{\varepsilon^{\alpha-1/2}}{\Gamma(\alpha+1/2)}.
\end{equation}
Finally, from \eqref{e-1}, \eqref{e-2} and for every $\alpha>0,$ we easily check that 
\begin{equation}\label{e-3}
m_\alpha\frac{\varepsilon^{\alpha-1/2}}{\Gamma(\alpha+1/2)}\leq \norm{\mathcal{H}^{\alpha}\left(\frac{J_{\alpha}(\varepsilon.)}{y^{\varepsilon}}\phi^{(\alpha)}_{j}\right)}_{L^2(0,+\infty)}\leq M_\alpha\frac{\varepsilon^{\alpha-1/2}}{\Gamma(\alpha+1/2)},
\end{equation}
where
\begin{equation*} \begin{cases}M_\alpha=2^{(2\alpha-1/2)}\chi_{\{\alpha\geq 1/2\}}(\alpha)+\frac{2^{2-5\alpha}}{\sqrt{\alpha}}\chi_{\{0<\alpha<1/2\}}(\alpha)\\m_\alpha=\frac{2^{(2\alpha-1/2)}}{\sqrt{2\alpha}}\chi_{\{\alpha\geq 1/2\}}(\alpha)+2^{\alpha-1/2}\chi_{\{0<\alpha<1/2\}}(\alpha)\end{cases}.\end{equation*}
 Let $f\in\mathcal{F}_N$ then $f(y)=\left(\sum_{j=1}^{N}a_j\phi^{(\alpha)}_{j}(y)\right)\frac{J_{\alpha}(\varepsilon.)}{y^{\alpha}},\,y\in(0,+\infty).$ From Parseval's equality, \eqref{e-3} and the fact that $\mbox{Support}\left(\mathcal{H}^{\alpha}\left(\frac{J_{\alpha}(\varepsilon.)}{y^{\alpha}}\phi^{(\alpha)}_{i}\right)\right)\cap \mbox{Support}\left(\mathcal{H}^{\alpha}\left(\frac{J_{\alpha}(\varepsilon.)}{y^{\alpha}}\phi^{(\alpha)}_{j}\right)\right)=\varnothing,\,\,i\not=j,\,(\varepsilon<\zeta)$, we obtain,
 \begin{eqnarray}\label{e-4}
  C^2_{\alpha,1}(\varepsilon)\norm{\hat{x}}^2_{L^2(0,1)}\leq\norm{f}^2_{L^2(0,\infty)}&=&\sum_{j=1}^N|a_j|^2\norm{\mathcal{H}^{\alpha}\left(\frac{J_{\alpha}(\varepsilon.)}{y^{\alpha}}\phi^{(\alpha)}_{j}\right)}^2_{L^2(0,+\infty)}\leq C^2_{\alpha,2}(\varepsilon)\norm{\hat{x}}^2_{L^2(0,1)}
  \end{eqnarray}
  where $C_{\alpha,1}(\varepsilon)=m_\alpha\times\frac{\varepsilon^{\alpha-1/2}}{\Gamma(\alpha+1/2)},\,C_{\alpha,2}(\varepsilon)=M_\alpha\times\frac{\varepsilon^{\alpha-1/2}}{\Gamma(\alpha+1/2)}$ and $x=\left(a_n\chi_{[|1,N|]}(n)\right)_n\in S_N$. 
  On the other hand, taking into account the decay of the function $x\to\left(\frac{J_{\alpha}( x)}{x^{\alpha}}\right)^2$ on $(0,s_1^{(\alpha)})$ and the fact that $\varepsilon< s_1^{(\alpha)},$ we get
  \begin{eqnarray}\label{e-5}
  \frac{J^2_{\alpha}(\varepsilon\omega )}{\omega^{2\alpha}}\norm{\hat{x}}^2_{L^2(0,\omega)}\leq\norm{f}^2_{L^2(0,\omega)}&=&\int_0^{\omega}|\hat{x}(y)|^2\left|\frac{J_{\alpha}(\varepsilon y)}{y^{\alpha}}\right|^2dy\leq \frac{\varepsilon^{2\alpha}}{2^{2\alpha}\Gamma^2(\alpha+1)}\norm{\hat{x}}^2_{L^2(0,\omega)}
  \end{eqnarray}
  For $g(x)=f(\omega x),\,x\in(0,1)$ and from \eqref{e-4} with \eqref{e-5}, we have
\begin{equation*}
\left(C'_{\alpha,1}(\omega)\right)^2\frac{\norm{g}^2_{L^2(0,1)}}{\norm{g}^2_{L^2(0,\infty)}}\leq  \frac{\norm{\hat{x}}^2_{L^2(0,\omega)}}{\norm{\hat{x}}^2_{L^2(0,1)}}\leq\left(C'_{\alpha,2}(\omega)\right)^2\frac{\norm{g}^2_{L^2(0,1)}}{\norm{g}^2_{L^2(0,\infty)}},
\end{equation*}
where $C'_{\alpha,1}(\omega)=\left(\frac{2}{\varepsilon}\right)^{\alpha}\Gamma(\alpha+1)C_{\alpha,1}(\omega)$ and $C'_{\alpha,2}(\omega)=\frac{\omega^{\alpha}}{J_\alpha(\varepsilon\alpha)}C_{\alpha,2}(\omega).$ On the other hand, $\mathcal{H}^{\alpha}(g)=\frac{1}{\omega}\mathcal{H}^{\alpha}(f)(\frac{.}{\omega})$, then we guarantee that $\mbox{Support}(\mathcal{H}^{\alpha}(g))\subset [0,\omega (s_N^{(\alpha)}+\varepsilon)]$ which we easily conclude that $g\in\mathcal{HB}_{\omega (s_N^{(\alpha)}+\varepsilon)}^{\alpha}.$ Finally by using \eqref{e3} (Min-Max characterisation), we get the desired result.
\end{proof}
In the following, we will focus on the comparison between the spectrum of  $\tilde{\mathcal{Q}}^{\alpha}_{N,\omega}$ and $\mathcal{Q}_c^{\alpha}$. Moreover, we will devote our efforts to estimating the distribution of significant eigenvalues on the interval $(0,1).$  The claims that we need are provided by the following proposition inspired from \cite{Ro} and \cite{Watson} chapter XVII.

\begin{proposition}
Let $\alpha\geq-1/2$ and $0<\omega<1$ then for every $N\geq 1$ and $(x,y)\in(0,\omega)^2,$ we have
\begin{equation}\label{e6}
\tilde{K}^{(\alpha)}_{N,\omega}(x,y)=K_{c_N}^{\alpha}(x,y)+F_{c_N}^{\alpha}(x,y)+O\left(\frac{1}{c_N}\right),
\end{equation}
where $K_{c_N}^{\alpha}$ is the kernel associated with the operator $\mathcal{Q}_c^{\alpha}$ given in section $3$ and $F_{c_N}^{\alpha}$ is given by the following identity, 
\begin{equation}
F_{c_N}^{\alpha}(x,y)=\frac{2}{\pi}\sin((x+y)c_N-2\gamma_\alpha)V(x+y)+\frac{2}{\pi}\sin((x-y)c_N)V(x-y).
\end{equation}
with $$\frac{1}{2}\displaystyle\frac{r}{4-r^2}\leq V(r)=\int_0^{\infty}\frac{\sinh(rt)}{1+e^{-2t}}e^{-2t}dt\leq \displaystyle\frac{r}{4-r^2}.$$	
\end{proposition}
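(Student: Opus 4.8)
The plan is to turn the identity into a comparison between a discrete sum over the Bessel zeros and a continuous integral, and then to measure the gap by a summation formula of Abel--Plana type. First I would put both kernels on the same footing. Expanding the definition of $\phi^{(\alpha)}_i$ gives
\[
\tilde{K}^{(\alpha)}_{N,\omega}(x,y)=2\sqrt{xy}\sum_{i=1}^{N}\frac{J_\alpha(s_i^{(\alpha)}x)J_\alpha(s_i^{(\alpha)}y)}{|J_{\alpha+1}(s_i^{(\alpha)})|^2},
\]
whereas Lommel's integral $\int_0^{c}tJ_\alpha(tx)J_\alpha(ty)\,dt=\frac{c}{x^2-y^2}\bigl(xJ_{\alpha+1}(cx)J_\alpha(cy)-yJ_{\alpha+1}(cy)J_\alpha(cx)\bigr)$ identifies the continuous kernel of Section $3$ as $K_{c_N}^\alpha(x,y)=\sqrt{xy}\int_0^{c_N}tJ_\alpha(tx)J_\alpha(ty)\,dt$. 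Hence the whole claim reduces to estimating the weighted sum minus this integral.

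Next I would substitute the Hankel asymptotics $J_\alpha(z)\sim\sqrt{2/(\pi z)}\cos(z-\gamma_\alpha)$, $\gamma_\alpha=\frac{\alpha\pi}{2}+\frac\pi4$, the McMahon expansion $s_i^{(\alpha)}=\pi i+\frac\pi2(\alpha-\frac12)+O(1/i)$ and the companion estimate $2/|J_{\alpha+1}(s_i^{(\alpha)})|^2=\pi s_i^{(\alpha)}(1+O(1/i))$ used in Section $4$. A product-to-sum step $\cos A\cos B=\frac12[\cos(A-B)+\cos(A+B)]$ then collapses both the sum and the integral onto the two scalar templates $\sum_i\cos(s_i^{(\alpha)}r)$ and $\frac1\pi\int_0^{c_N}\cos(tr)\,dt$, evaluated at $r=x-y$ and, with the extra phase $-2\gamma_\alpha$, at $r=x+y$. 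Comparing these is a ``sum minus integral'' problem for nodes spaced asymptotically by $\pi$ and located at half-integer-type points, which I would treat with the half-integer Abel--Plana formula, whose error kernel is precisely the Fermi--Dirac factor $1/(e^{2t}+1)=e^{-2t}/(1+e^{-2t})$. The principal (integral) part reproduces $K_{c_N}^\alpha$ up to $O(1/c_N)$; the Abel--Plana correction integral supplies the amplitude $V$; and the truncation at $i=N$ (where $s_N^{(\alpha)}\approx c_N$) generates the boundary oscillations $\sin((x\pm y)c_N-\cdots)$. Collecting these produces $F_{c_N}^\alpha$, with the phase $-2\gamma_\alpha$ surviving only in the $(x+y)$ term.

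The hard part will be the error bookkeeping in this last step: I must show that the $O(1/i)$ defects in both the McMahon expansion and the weight asymptotics, once weighted by the slowly decaying Bessel amplitudes and summed against the oscillatory factors, genuinely collapse into a single $O(1/c_N)$ remainder, uniformly in $(x,y)\in(0,\omega)^2$, and do not leak into the leading correction $F_{c_N}^\alpha$. Justifying the termwise asymptotics and the interchange of summation and integration is exactly where the estimates of \cite{Ro} and \cite{Watson}, Chapter XVII, enter.

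Finally, the bounds on $V$ are elementary and self-contained. For $0<r<2$ --- which holds here because $0<x,y<\omega<1$ forces $|x-y|<1$ and $x+y<2$ --- the integral converges, and since $\frac12\le\frac{1}{1+e^{-2t}}\le 1$ for $t\ge 0$, it remains only to evaluate $\int_0^\infty\sinh(rt)e^{-2t}\,dt=\frac12\bigl(\frac{1}{2-r}-\frac{1}{2+r}\bigr)=\frac{r}{4-r^2}$. This immediately yields $\frac12\frac{r}{4-r^2}\le V(r)\le\frac{r}{4-r^2}$.
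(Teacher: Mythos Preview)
Your route is genuinely different from the paper's. The paper does \emph{not} substitute asymptotics into the sum and then invoke Abel--Plana; instead it runs a residue computation. One integrates
\[
F(z)=\frac{zH^{(1)}_\alpha(z)J_\alpha(xz)J_\alpha(yz)}{J_\alpha(z)}
\]
around a rectangle in the upper half-plane with base $[-c_N,c_N]$, indented at the Bessel zeros. The residues at $\pm s_j^{(\alpha)}$ reproduce the discrete kernel $\tilde K^{(\alpha)}_{N,\omega}$ exactly (via the Wronskian $\mathcal W\{J_\alpha,H^{(1)}_\alpha\}=2i/(\pi z)$), the real-axis integral collapses to $\int_0^{c_N}tJ_\alpha(xt)J_\alpha(yt)\,dt$ by $H^{(1)}+H^{(2)}=2J_\alpha$, and the correction $F_{c_N}^\alpha$ is the contribution of the vertical segments $\Re z=\pm c_N$. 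Only on those segments are the Hankel asymptotics inserted, and the choice $c_N-\gamma_\alpha=N\pi$ turns $H^{(1)}_\alpha/J_\alpha$ on the line $c_N+it$ into essentially $2e^{-t}/\cosh t=4e^{-2t}/(1+e^{-2t})$, which is exactly the Fermi--Dirac weight you anticipated. So your heuristic and the paper's mechanism meet at the same place, but via contour integration rather than a summation formula.

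What the contour approach buys is clean error control: asymptotics are used only on the boundary lines, so the $O(1/c_N)$ remainder is immediate. Your scheme, by contrast, feeds $O(1/i)$ defects from McMahon and from the weight $|J_{\alpha+1}(s_i)|^{-2}$ into every one of $N$ terms; since the net amplitude per term is $O(1)$, a naive bound gives $O(\log N)$, not $O(1/c_N)$. You flag this honestly as ``the hard part'', but the proposal does not indicate a concrete device (e.g.\ carrying a further term of McMahon and summing the resulting $O(1/i^2)$ tail, together with a summation-by-parts argument to exploit the oscillation in the $O(1/i)$ layer) that would close the gap. Until that step is supplied, the Abel--Plana route remains a plausible outline rather than a proof; the paper's residue argument sidesteps the issue entirely. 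Your treatment of the bounds on $V$ is correct and matches the paper.
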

\begin{proof}
	Let $H^{(1)}_\alpha$ be the Bessel function of the third kind called Hankel function, we consider the holomorphic function given by $$F(z)=\frac{zH^{(1)}_\alpha(z)J_{\alpha}(xz)J_{\alpha}(yz)}{J_{\alpha}(z)},\,\,z=\xi+i\eta,$$ within the contour consisting of the $\xi-$axis from $-c_N$ to $c_N,$ indented at $\xi=0$ and at the zero of $J_{\alpha}$ with radius $\varepsilon>0$ and the lines $\xi=c_N,\,\eta=M>0$ and $\xi=-c_N.$ From the Residue theorem, we have
	$$\int F(z)dz=0.$$
	On the other hand, if we denoted by $s_0^{(\alpha)}=0,$ then we have
	\begin{eqnarray}\label{e7}
	\int F(z)dz&=&-i\varepsilon\sum_{j=1}^N\int_0^\pi F(\pm s_j^{(\alpha)}+\varepsilon e^{it})e^{it}dt-i\varepsilon\int_0^\pi F(\varepsilon e^{it})e^{it}dt\\&+&\int_{-c_N}^{-s_N^{(\alpha)}-\varepsilon}F(t)dt+\sum_{j=0}^{N-1}\int_{-s_{j+1}^{(\alpha)}+\varepsilon}^{-s_{j}^{(\alpha)}-\varepsilon} F(t)dt+\sum_{j=0}^{N-1}\int_{s_j^{(\alpha)}+\varepsilon}^{s_{j+1}^{(\alpha)}-\varepsilon} F(t)dt+\int_{s_N^{(\alpha)}-\varepsilon}^{c_N}F(t)dt\nonumber\\&+&i\int_0^M F(c_N+it)dt-\int_{-c_N}^{c_N}F(t+iM)dt-i\int_0^M F(-c_N+it)dt\nonumber.
	\end{eqnarray}
At first, for every $0\leq j\leq N-1,$ we have
$$\varepsilon F(\pm s_j^{(\alpha)}+\varepsilon e^{it})e^{it}=\frac{(\pm s_j^{(\alpha)}+\varepsilon e^{it})H^{(1)}_\alpha(\pm s_j^{(\alpha)}+\varepsilon e^{it})J_{\alpha}(x(\pm s_j^{(\alpha)}+\varepsilon e^{it}))J_{\alpha}(y(\pm s_j^{(\alpha)}+\varepsilon e^{it}))}{\frac{J_{\alpha}(\pm s_j^{(\alpha)}+\varepsilon e^{it})-J_{\alpha}(\pm s_j^{(\alpha)})}{(\pm s_j^{(\alpha)}+\varepsilon e^{it})-(\pm s_j^{(\alpha)})}}$$
By tending $\varepsilon$ to zero, we obtain,
$$\displaystyle\lim_{\varepsilon\to0}\varepsilon F(\pm s_j^{(\alpha)}+\varepsilon e^{it})e^{it}= \frac{\pm s_j^{(\alpha)}H^{(1)}_\alpha(\pm s_j^{(\alpha)})J_{\alpha}(\pm s_j^{(\alpha)}x)J_{\alpha}(\pm s_j^{(\alpha)}y)}{J'_{\alpha}(\pm s_j^{(\alpha)})}$$
From the Wronskian formula given in \cite{Watson} by
$\mathcal{W}\{J_{\alpha}, H^{(1)}_\alpha\}(z)=\displaystyle\frac{2i}{\pi z}$ and the fact that $J_{\alpha}(e^{i\pi}z)=e^{i\alpha\pi}J_{\alpha}(z)$, we obtain
\begin{equation}
\displaystyle\lim_{\varepsilon\to 0}-i\varepsilon \int_0^{\pi}F(\pm s_j^{(\alpha)}+\varepsilon e^{it})e^{it}dt=-2\frac{J_{\alpha}(\pm s_j^{(\alpha)}x)J_{\alpha}(\pm s_j^{(\alpha)}y)}{\left(J'_{\alpha}(\pm s_j^{(\alpha)})\right)^2}=-2\frac{J_{\alpha}( s_j^{(\alpha)}x)J_{\alpha}( s_j^{(\alpha)}y)}{\left(J'_{\alpha}( s_j^{(\alpha)})\right)^2}
\end{equation}
Finally, from the previous equation and the fact that $\displaystyle\lim_{M\to+\infty}F(t+iM)=0$, then by tending $\varepsilon\to 0$ and $M\to +\infty$ in \eqref{e7}, we get
\begin{eqnarray*}
2\sum_{j=1}^N\frac{J_{\alpha}( s_j^{(\alpha)}x)J_{\alpha}( s_j^{(\alpha)}y)}{\left(J'_{\alpha}( s_j^{(\alpha)})\right)^2}&=&\frac{1}{2}\int_0^{c_N}(F(t)+F(-t))dt+\frac{i}{2}\int_0^{+\infty}(F(-c_N+it)-F(c_N+it))dt\\&=&\frac{1}{2}\int_0^{c_N}\frac{t\left(H^{(1)}_\alpha(t)+H^{(2)}_\alpha(t)\right)J_{\alpha}(xt)J_{\alpha}(yt)}{J_{\alpha}(t)}dt+\frac{i}{2}\int_0^{+\infty}(F(-c_N+it)-F(c_N+it))dt\\&=&\int_0^{c_N}tJ_{\alpha}(xt)J_{\alpha}(yt)dt+\frac{i}{2}\int_0^{+\infty}(F(-c_N+it)-F(c_N+it))dt\\&=&c_N\frac{xJ_{\alpha+1}(c_Nx)J_{\alpha}(c_Ny)-yJ_{\alpha+1}(c_Ny)J_{\alpha}(c_Nx)}{x^2-y^2}+\frac{i}{2}\int_0^{+\infty}\left(F(-c_N+it)-F(c_N+it)\right)dt.
\end{eqnarray*}
Using that $\displaystyle F(e^{i\pi}z)=\frac{zH^{(2)}_\alpha(z)J_{\alpha}(xz)J_{\alpha}(yz)}{J_{\alpha}(z)},$ where $H^{(2)}_\alpha$ is the Hankel function of the second kind, and the fact that $H^{(2)}_\alpha(\bar{z})=\overline{H^{(1)}_\alpha}(z),$ we get 
$F(-c_N+it)=\overline{F(c_N+it)}.$ Then we have
\begin{equation}
\tilde{K}^{(\alpha)}_{N,\omega}(x,y)=K_{c_N}^{\alpha}(x,y)+\sqrt{xy}\int_0^{+\infty}\Im\left(F(c_N+it)\right)dt.
\end{equation}
Using the following uniform estimate of $H^{(1)}_{\alpha}$ and $H^{(2)}_{\alpha}$ given in \cite{Watson} by
$$H^{(1)}_{\alpha}(z)=\sqrt{\frac{2}{\pi z}}e^{i\left(z-\gamma_{\alpha}\right)}\left(1+\frac{I_1(z)}{z}\right),\,\,H^{(2)}_{\alpha}(z)=\sqrt{\frac{2}{\pi z}}e^{-i\left(z-\gamma_{\alpha}\right)}\left(1+\frac{I_2(z)}{z}\right)$$
where $I_1$ and $I_2$ two bounded functions and $\gamma_{\alpha}=\frac{\alpha\pi}{2}+\frac{\pi}{4}$, and by taken account that for every $z$ we have $J_\alpha(z)=\displaystyle\frac{H^{(1)}_{\alpha}(z)+,H^{(2)}_{\alpha}(z)}{2}$, then we get
$$J_\alpha(z)=\sqrt{\frac{2}{\pi z}}\left[\cos(z-\gamma_\alpha)+\frac{e^{i\left(z-\gamma_{\alpha}\right)}I_1(z)+e^{-i\left(z-\gamma_{\alpha}\right)}I_2(z)}{2z}\right].$$
Finally, from the fact that $z=c_N+it,\,t\in(0,\infty)$, we obtain
\begin{eqnarray*}
\Im\left(F(c_N+it)\right)&=&\frac{2}{\pi\sqrt{xy}}\sin(2(c_N-\gamma_\alpha))\frac{\cos((x+y)c_N-2\gamma_\alpha)\cosh((x+y)t)+\cos((x-y)c_N)\cosh((x-y)t)}{(e^{-2t}+\cos(2(c_N-\gamma_\alpha)))^2+\sin^2(2(c_N-\gamma_\alpha))}e^{-2t}\\&+&\frac{2}{\pi\sqrt{xy}}\cos(2(c_N-\gamma_\alpha))\frac{\sin((x+y)c_N-2\gamma_\alpha)\sinh((x+y)t)+\sin((x-y)c_N)\sinh((x-y)t)}{(e^{-2t}+\cos(2(c_N-\gamma_\alpha)))^2+\sin^2(2(c_N-\gamma_\alpha))}e^{-2t}\\&+&\frac{2}{\pi\sqrt{xy}}\frac{\sin((x+y)c_N-2\gamma_\alpha)\sinh((x+y)t)+\sin((x-y)c_N)\sinh((x-y)t)}{(e^{-2t}+\cos(2(c_N-\gamma_\alpha)))^2+\sin^2(2(c_N-\gamma_\alpha))}e^{-4t}\\&+&\frac{\left(\cosh((x+y)t)+\cosh((x-y)t)\right)}{\sqrt{xy}}\frac{e^{-2t}}{1+e^{-2t}}O\left(\frac{1}{c_N}\right)
\end{eqnarray*}
We recall that $c_N=(N+\frac{\alpha}{2}+\frac{1}{4})\pi=N\pi+\gamma_{\alpha},$ then we have $c_N-\gamma_{\alpha}=N\pi.$ Consequently, we have
\begin{eqnarray*}
\sqrt{xy}\int_0^{+\infty}\Im\left(F(c_N+it)\right)dt&=&\frac{2}{\pi}\sin((x+y)c_N-2\gamma_\alpha)\int_0^{\infty}\frac{\sinh((x+y)t)}{1+e^{-2t}}e^{-2t}dt\\&+&\frac{2}{\pi}\sin((x-y)c_N)\int_0^{\infty}\frac{\sinh((x-y)t)}{1+e^{-2t}}e^{-2t}dt+O\left(\frac{1}{c_N}\right).
\end{eqnarray*}
We can easily prove that $\displaystyle\int_0^{\infty}\frac{\sinh(rt)}{1+e^{-2t}}e^{-2t}dt$ is controllable on both sides by 
$\displaystyle\frac{r}{4-r^2},\,\,|r|<2,$ which we can deduce that for every $(x,y)\in(0,\omega)^2,$ we have
\begin{eqnarray*}
\sqrt{xy}\int_0^{+\infty}\Im\left(F(c_N+it)\right)dt&=&\frac{2}{\pi}\sin((x+y)c_N-2\gamma_\alpha)V(x+y)\\&+&\frac{2}{\pi}\sin((x-y)c_N)V(x-y)+O\left(\frac{1}{c_N}\right).
\end{eqnarray*}
where $$\frac{1}{2}\displaystyle\frac{r}{4-r^2}\leq V(r)=\int_0^{\infty}\frac{\sinh(rt)}{1+e^{-2t}}e^{-2t}dt\leq \displaystyle\frac{r}{4-r^2}.$$
\end{proof}
\begin{proposition}
	Let $0<\omega<1$, $\alpha\geq-1/2$ be two real numbers and $N\geq 1$ be an integer, then we have
	\begin{equation}
	\norm{\tilde{\lambda}\left(\tilde{\mathcal{Q}}^{\alpha}_{N,\omega}\right)-\lambda\left(\mathcal{Q}_c^\alpha\right)}^2_{\ell_2(\mathbb{R})}:=\sum_{n=0}^{+\infty}\left|\tilde{\lambda}_{n,\alpha}^N(\omega)- \lambda_{n,\alpha}(c)\right|^2\leq \frac{2}{\pi}\sqrt{\ln\left(\frac{1}{(1-\omega^2)}\right)}+C\frac{\omega^2}{c}. 
	\end{equation} 
	where $c=(N+\frac{1}{2}\alpha+\frac{1}{4})\pi\omega.$ 
\end{proposition}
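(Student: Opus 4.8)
The plan is to realise the two eigenvalue sequences as the spectra of two self-adjoint Hilbert--Schmidt operators acting on the \emph{same} Hilbert space $L^2(0,\omega)$, and then to compare them through the Hoffman--Wielandt inequality, fed by the kernel expansion \eqref{e6}. First I would record that the continuous eigenvalues $\lambda_{n,\alpha}(c)$, with $c=\omega c_N$ and $c_N=(N+\frac{\alpha}{2}+\frac14)\pi$, are exactly the eigenvalues of the integral operator $\mathcal{Q}_{c_N}^{\alpha}$ on $L^2(0,\omega)$ whose kernel is $K_{c_N}^{\alpha}(x,y)=c_N\,G_\alpha(c_Nx,c_Ny)$. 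Indeed, the dilation $U\colon L^2(0,\omega)\to L^2(0,1)$, $(Uf)(u)=\sqrt{\omega}\,f(\omega u)$, is unitary and conjugates $\mathcal{Q}_{c_N}^{\alpha}$ into the operator $\mathcal{Q}_{c}^{\alpha}$ of \eqref{13} on $L^2(0,1)$ with $c=\omega c_N=(N+\frac12\alpha+\frac14)\pi\omega$; a unitary conjugation preserves the decreasing eigenvalue list, so the spectrum of $\mathcal{Q}_{c_N}^{\alpha}$ on $L^2(0,\omega)$ is precisely $(\lambda_{n,\alpha}(c))_{n\ge0}$. Thus the problem becomes the comparison of $\tilde{\mathcal{Q}}^{\alpha}_{N,\omega}$ and $\mathcal{Q}_{c_N}^{\alpha}$ on $L^2(0,\omega)$, whose kernels differ on $(0,\omega)^2$ by $F_{c_N}^{\alpha}+O(1/c_N)$ according to \eqref{e6}.

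Both operators are self-adjoint, since $\tilde K^{(\alpha)}_{N,\omega}$, $K_{c_N}^{\alpha}$ and $F_{c_N}^{\alpha}$ are symmetric in $(x,y)$, and their difference is Hilbert--Schmidt because the remainder kernel is square-integrable on the bounded square $(0,\omega)^2$. I would then invoke the Hoffman--Wielandt inequality for compact self-adjoint operators in its Hilbert--Schmidt form: pairing the two sequences in their common decreasing order gives
\[
\left(\sum_{n\ge0}\abs{\tilde\lambda_{n,\alpha}^N(\omega)-\lambda_{n,\alpha}(c)}^2\right)^{1/2}\le \norm{\tilde{\mathcal{Q}}^{\alpha}_{N,\omega}-\mathcal{Q}_{c_N}^{\alpha}}_{HS}=\left(\iint_{(0,\omega)^2}\abs{F_{c_N}^{\alpha}(x,y)+O(1/c_N)}^2\,dx\,dy\right)^{1/2}.
\]
The triangle inequality in the Hilbert--Schmidt norm bounds the right-hand side by $\norm{F_{c_N}^{\alpha}}_{L^2((0,\omega)^2)}+\norm{O(1/c_N)}_{L^2((0,\omega)^2)}$. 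The second summand is harmless: the remainder is $O(1/c_N)$ uniformly on a square of area $\omega^2$, so its $L^2$ norm is $O(\omega/c_N)=O(\omega^2/c)$, which is exactly the advertised term $C\omega^2/c$.

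The core of the proof is the estimate of $\norm{F_{c_N}^{\alpha}}_{L^2((0,\omega)^2)}$. Expanding the square of
\[
F_{c_N}^{\alpha}(x,y)=\frac{2}{\pi}\sin\!\big((x+y)c_N-2\gamma_\alpha\big)\,V(x+y)+\frac{2}{\pi}\sin\!\big((x-y)c_N\big)\,V(x-y),
\]
I would linearise each $\sin^2$ and each sine-product; the purely oscillatory pieces (the $\cos(2(x\pm y)c_N-\cdots)$ terms and the cross term) integrate to $O(1/c_N)$ after one integration by parts against the smooth weight, and are again absorbed into $C\omega^2/c$. What remains is controlled, via the upper bound $V(r)\le \frac{r}{4-r^2}$ established in the preceding proposition, by the two diagonal integrals $\iint_{(0,\omega)^2}V(x+y)^2$ and $\iint_{(0,\omega)^2}V(x-y)^2$. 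For the first I would pass to $s=x+y$, whose push-forward density on the square is the triangular weight $\ell(s)=\min(s,2\omega-s)$; the essential mechanism is that, as $s\to2\omega$, the vanishing factor $(2\omega-s)$ cancels one power of the growing factor $V(s)^2\sim\tfrac14(2-s)^{-2}$, converting the apparent power singularity into a logarithmic one and reducing the logarithmic part of the estimate to $\int_0^{2\omega}\frac{s}{4-s^2}\,ds=\frac12\ln\frac{1}{1-\omega^2}$. The second integral stays uniformly bounded because $\abs{x-y}<\omega<1<2$ keeps $V(x-y)$ away from its singularity, so it only modifies the constant. Collecting these estimates gives $\norm{F_{c_N}^{\alpha}}_{L^2}\le \frac{2}{\pi}\sqrt{\ln\frac{1}{1-\omega^2}}$ after normalisation, and hence the stated bound on the $\ell_2$-distance between the two spectra.

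The step I expect to be the main obstacle is precisely this edge analysis: extracting the logarithm $\ln\frac{1}{1-\omega^2}$ with the correct constant from $\iint_{(0,\omega)^2}V(x+y)^2$ requires sharp control of $V$ and the triangular weight near $s=2\omega$, and one must simultaneously confirm that every oscillatory remainder is genuinely $O(1/c_N)$ uniformly in $(x,y)\in(0,\omega)^2$ — which is where the smoothness and boundedness of $V$ and $V'$ on $(0,2\omega)$ enter decisively.
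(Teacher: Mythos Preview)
Your proposal follows the paper's proof almost verbatim: identify $\lambda_{n,\alpha}(c)$ with the spectrum of $\mathcal{Q}_{c_N}^{\alpha}$ on $L^2(0,\omega)$ via the dilation $y=\omega u$, apply the Hoffman--Wielandt inequality, and bound $\norm{\tilde{\mathcal{Q}}^{\alpha}_{N,\omega}-\mathcal{Q}_{c_N}^{\alpha}}_{HS}$ using the kernel expansion \eqref{e6}. The only divergence is in the final step, where the paper is much cruder than you anticipate: it does not linearise the sines or exploit any oscillatory cancellation, but simply bounds $|\sin|\le 1$ and $V(r)\le r/(4-r^2)=\tfrac12\bigl(\tfrac{1}{2-r}-\tfrac{1}{2+r}\bigr)$ pointwise, then evaluates the resulting elementary double integral directly to obtain $\frac{4}{\pi^2}\ln\frac{1}{1-\omega^2}$ --- so the ``edge analysis'' you flag as the main obstacle is bypassed entirely.
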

\begin{proof}
	We recall that the most time concentrated functions on $(0,\omega)$ and Hankel band limited with bandwidth $c_N=(N+\frac{1}{2}\alpha+\frac{1}{4})\pi$ was the eigenfunctions of the positive self adjoint operator $\mathcal{Q}_{c_N}^{\alpha}$ satisfying the following integral equation
	\begin{equation*}
	\mathcal{Q}_{c_N}^{\alpha}(f)=\int_0^{\omega}K_{c_N}^{\alpha}(.,y)f(y)dy=\lambda_{\omega}(c_N)f
	\end{equation*}
	By using a change of variable $y=\omega u$, we obtain that for all $x\in(0,1),$
	\begin{equation*}
	\int_0^{1}\omega K_{c_N}^{\alpha}(\omega x,\omega y)f(\omega y)dy=\lambda_{\omega}(c_N)f(\omega x).
	\end{equation*}
	Further more, $\mathcal{H}^{\alpha}(f(\omega.))=\frac{1}{\omega}\mathcal{H}^{\alpha}(f)(\frac{.}{\omega}),$ so if $f$ is Hankel supported in $(0,c_N)$ then Support$\left(\mathcal{H}^{\alpha}(f(\omega.))\right)\subset(0,\omega c_N).$ Hence, $\lambda_{\omega}(c_N)=\lambda(\omega c_N)$, where $\lambda$ are the eigenvalues of $\mathcal{Q}_{c}^{\alpha}$ given in section $3$ with $c=\omega c_N.$ 
	Let's begin from the Wielandt-Hoffman inequality which is valid for any compact operator, that is 
	\begin{equation}
	\norm{\tilde{\lambda}\left(\tilde{\mathcal{Q}}^{\alpha}_{N,\omega}\right)-\lambda\left(\mathcal{Q}_{c}^\alpha\right)}_{\ell_2(\mathbb{R})}=\norm{\tilde{\lambda}\left(\tilde{\mathcal{Q}}^{\alpha}_{N,\omega}\right)-\lambda\left(\mathcal{Q}_{c_N}^\alpha\right)}_{\ell_2(\mathbb{R})}\leq \norm{\tilde{\mathcal{Q}}^{\alpha}_{N,\omega}-\mathcal{Q}_{c_N}^\alpha}_{HS}.
	\end{equation}
	where $\norm{.}_{HS}$ is the Hilbert-Schmidt norm. Consequently, we must have to prove that $\norm{\tilde{\mathcal{Q}}^{\alpha}_{N,c}-\mathcal{Q}_{c_N}^\alpha}_{HS}$ satisfy our result. In order to prove that, we will use the explicit kernel of $\tilde{\mathcal{Q}}^{\alpha}_{N,c}-\mathcal{Q}_{c_N}^\alpha$ given by \eqref{e6},
then we obtain
	\begin{eqnarray*}
	\norm{\tilde{\mathcal{Q}}^{\alpha}_{N,\omega}-\mathcal{Q}_{c_N}^\alpha}_{HS}^2&=&\int_0^{\omega}\int_0^{\omega}\left|\tilde{K}^{\alpha}_{N,\omega}(x,y)-K_{c_N}^\alpha(x,y)\right|^2dxdy\\&\leq&\frac{4}{\pi^2}\int_0^{\omega}\int_0^{\omega}\left(\frac{(x+y)}{4-(x+y)^2}+\frac{(x-y)}{4-(x-y)^2}\right)^2dxdy+C\frac{\omega^2}{c^2_N}\\&\leq&\frac{1}{\pi^2}\int_0^{\omega}\int_0^{\omega}\left(\frac{1}{2-(x+y)}-\frac{1}{2+(x+y)}+\frac{1}{2-(x-y)}-\frac{1}{2+(x-y)}\right)^2dxdy+C\frac{\omega^2}{c^2_N}\\&\leq&\frac{4}{\pi^2}\ln(\frac{1}{1-w^2})+C\frac{\omega^2}{c^2_N}.
	\end{eqnarray*}
\end{proof}
\begin{lemma}
	Let $0<\omega<1$, $\alpha\geq-1/2$ be two real numbers and $N\geq N_0$ be an integer, then we have
	\begin{equation}
	\mbox{Trace}\left(\tilde{\mathcal{Q}}^{\alpha}_{N,\omega}\right)=\frac{c}{\pi}-\frac{\alpha}{2}+\left(\frac{1+\omega^2}{c}+\frac{1}{2\pi}\ln\left(\frac{1+\omega}{1-\omega}\right)\right)O(1).
	\end{equation}
	where $c=(N+\frac{1}{2}\alpha+\frac{1}{4})\pi\omega.$	
\end{lemma}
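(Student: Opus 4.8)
The plan is to compute the trace directly from the degenerate kernel and then feed in the kernel expansion of Proposition 3. Since $\tilde{\mathcal{Q}}^{\alpha}_{N,\omega}$ is the finite-rank operator $f\mapsto\sum_{i=1}^{N}\phi^{(\alpha)}_{i}\int_0^\omega\phi^{(\alpha)}_{i}(y)f(y)\,dy$, its trace equals $\sum_{i=1}^{N}\int_0^\omega\bigl(\phi^{(\alpha)}_{i}(y)\bigr)^2dy=\int_0^\omega\tilde{K}^{(\alpha)}_{N,\omega}(x,x)\,dx$. First I would restrict the decomposition \eqref{e6} to the diagonal. There the second summand of $F_{c_N}^{\alpha}$ drops out, because $\sin((x-y)c_N)V(x-y)$ vanishes at $x=y$ (indeed $V(0)=0$), so that $\tilde{K}^{(\alpha)}_{N,\omega}(x,x)=K_{c_N}^{\alpha}(x,x)+\frac{2}{\pi}\sin(2xc_N-2\gamma_\alpha)V(2x)+O(1/c_N)$, and $\mbox{Trace}(\tilde{\mathcal{Q}}^{\alpha}_{N,\omega})$ splits into three integrals over $(0,\omega)$.

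For the main integral, the change of variable $u=c_N x$ gives $\int_0^\omega K_{c_N}^{\alpha}(x,x)\,dx=\int_0^{c}G_\alpha(u,u)\,du$ with $c=\omega c_N=(N+\frac12\alpha+\frac14)\pi\omega$. Using the Bessel recurrences I would rewrite the diagonal as $G_\alpha(u,u)=\frac{u}{2}\bigl(J_\alpha^2(u)+J_{\alpha+1}^2(u)\bigr)-\alpha J_\alpha(u)J_{\alpha+1}(u)$, and then Lommel's identity $\int_0^c uJ_\beta^2(u)\,du=\frac{c^2}{2}\bigl(J_\beta^2(c)-J_{\beta-1}(c)J_{\beta+1}(c)\bigr)$ yields the exact evaluation $\frac12\int_0^c u(J_\alpha^2+J_{\alpha+1}^2)\,du=\frac{c^2}{2}\bigl(J_\alpha^2(c)+J_{\alpha+1}^2(c)\bigr)-\frac{c(2\alpha+1)}{2}J_\alpha(c)J_{\alpha+1}(c)$. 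Inserting the two-term large-argument asymptotics of $J_\alpha$ and $J_{\alpha+1}$ (legitimate since $N\ge N_0$ forces $c$ large), the two a priori $O(1)$ oscillatory contributions $\pm\frac{2\alpha+1}{2\pi}\sin(2(c-\gamma_\alpha))$ cancel exactly, leaving $\frac{c}{\pi}+O(1/c)$.

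For the remaining cross term I would invoke the classical Weber--Schafheitlin evaluation $\int_0^\infty J_\alpha(u)J_{\alpha+1}(u)\,du=\frac12$ (see \cite{Watson}) together with the tail estimate $\int_c^\infty J_\alpha J_{\alpha+1}\,du=O(1/c)$, which follows from $J_\alpha(u)J_{\alpha+1}(u)=\frac{1}{\pi u}\sin(2(u-\gamma_\alpha))+O(u^{-2})$ by one integration by parts. Hence $-\alpha\int_0^c J_\alpha J_{\alpha+1}\,du=-\frac{\alpha}{2}+O(1/c)$, and the main integral equals $\int_0^c G_\alpha(u,u)\,du=\frac{c}{\pi}-\frac{\alpha}{2}+O(1/c)$, producing the two leading terms of the statement.

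Finally I would bound the two error integrals. For the oscillatory piece, $\frac{2}{\pi}\bigl|\int_0^\omega\sin(2xc_N-2\gamma_\alpha)V(2x)\,dx\bigr|\le\frac{2}{\pi}\int_0^\omega V(2x)\,dx$, and the upper bound $V(2x)\le\frac{x}{2(1-x^2)}$ from Proposition 3 gives $\frac{1}{\pi}\int_0^\omega\frac{x}{1-x^2}\,dx=\frac{1}{2\pi}\ln\frac{1}{1-\omega^2}$, which is $\frac{1}{2\pi}\ln\frac{1+\omega}{1-\omega}\,O(1)$ since both have the same $-\ln(1-\omega)$ singularity as $\omega\to1$. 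The last integral contributes $\int_0^\omega O(1/c_N)\,dx=O(\omega/c_N)=O(\omega^2/c)$ because $c_N=c/\omega$; combined with the $O(1/c)$ from the main integral this is absorbed into $\frac{1+\omega^2}{c}\,O(1)$. Collecting the three pieces gives the claimed expansion. The main obstacle is the main integral: one must check that the potentially $O(1)$ oscillatory boundary contributions cancel and isolate the exact constant $-\frac{\alpha}{2}$, which rests on the exact Lommel evaluation and on the value $\frac12$ of $\int_0^\infty J_\alpha J_{\alpha+1}$; the bookkeeping for the $F_{c_N}^{\alpha}$-term and the $O(1/c_N)$ remainder is then comparatively routine.
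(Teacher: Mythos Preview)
Your proof is correct and follows the same architecture as the paper's: write the trace as $\int_0^\omega \tilde K^{(\alpha)}_{N,\omega}(x,x)\,dx$, split the diagonal via the kernel decomposition \eqref{e6}, and bound the $F_{c_N}^\alpha$ and $O(1/c_N)$ pieces exactly as you do. The one point of divergence is the evaluation of the main term $\int_0^\omega K_{c_N}^\alpha(x,x)\,dx$. The paper simply rescales this to $\mbox{Trace}(\mathcal Q_c^\alpha)$ and then quotes the identity $\mbox{Trace}(\mathcal Q_c^\alpha)=\dfrac{c}{\pi}-\dfrac{\alpha}{2}+O(1/c)$ from \cite{M.B}. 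You instead recompute this quantity from scratch: the diagonal formula $G_\alpha(u,u)=\tfrac{u}{2}(J_\alpha^2+J_{\alpha+1}^2)-\alpha J_\alpha J_{\alpha+1}$, Lommel's integral, and the Weber--Schafheitlin value $\int_0^\infty J_\alpha J_{\alpha+1}\,du=\tfrac12$ all check out, and the two $O(1)$ oscillatory boundary contributions do cancel exactly (the coefficient $a_\alpha-a_{\alpha+1}$ coming from the second asymptotic term of $J_\alpha^2+J_{\alpha+1}^2$ equals $\tfrac{2\alpha+1}{2}$ and kills the $-\tfrac{2\alpha+1}{2\pi}\sin(2(c-\gamma_\alpha))$ from the cross term). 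So your argument is a self-contained replacement for the citation; otherwise the two proofs coincide.
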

\begin{proof}
By the Mercer theorem, the trace of an integral operator $\mathcal{H}$ defined on $L^2(I,\omega)$ with associated kernel $H(x,y),\, x,y\in I,$ is of the form
\begin{equation*}
\mbox{Trace}\left(\mathcal{H}\right)=\int_IH(x,x)\omega(x)dx.
\end{equation*}
Then, we have
\begin{equation}
\mbox{Trace}\left(\tilde{\mathcal{Q}}^{\alpha}_{N,\omega}\right)=\int_0^{\omega}\tilde{K}^{\alpha}_{N,\omega}(x,x)dx
\end{equation}
From the estimate kernel given in \eqref{e6} and the previous equality, we obtain
\begin{eqnarray*}
\mbox{Trace}\left(\tilde{\mathcal{Q}}^{\alpha}_{N,\omega}\right)&=&\int_0^{\omega}K_{c_N}^{\alpha}(x,x)dx+\left(\frac{1}{2\pi}\int_0^{\omega}\left(\frac{1}{1-x}+\frac{1}{(1+x)}\right)dx+\frac{\omega^2}{c}\right) O(1)\\&=&\omega\int_0^{1}K_{c_N}^{\alpha}(\omega x,\omega x)dx+\left(\frac{1}{2\pi}\ln\left(\frac{1+\omega}{1-\omega}\right)+\frac{\omega^2}{c}\right)O(1)\\&=&\mbox{Trace}\left(\mathcal{Q}_{c}^\alpha\right)+\left(\frac{1}{2\pi}\ln\left(\frac{1+\omega}{1-\omega}\right)+\frac{\omega^2}{c}\right)O(1).
\end{eqnarray*}
where $c=(N+\frac{1}{2}\alpha+\frac{1}{4})\pi\omega.$ 
From \cite{M.B}, we have $\mbox{Trace}\left(\mathcal{Q}_{c}^\alpha\right)=\displaystyle\frac{c}{\pi}-\frac{\alpha}{2}+O(\frac{1}{c}),$ then we obtain the desired result.
\end{proof}
\begin{proposition}
Let $0<\omega<1$, $\alpha\geq-1/2$ be two real numbers and $N\geq 1$ be an integer, then for every $0<\varepsilon<1/2,$ we have
\begin{eqnarray*}
\#\{n,\,\varepsilon<\tilde{\lambda}_{n,\alpha}^N(\omega)<1-\varepsilon\}&\leq&\frac{K_{\alpha}}{\varepsilon(1-\varepsilon)}\left[\frac{\omega^2}{c}+\ln(c)+G(w)\right]\end{eqnarray*}
where
\begin{equation*}
 G(w)=(\frac{1}{2}+\omega)\ln\left(\frac{1+\omega}{1-\omega}\right)+\ln\left(\frac{4(1-w^2)}{4-\omega^2}\sqrt{\frac{2-\omega}{2+\omega}}\right)+2\frac{\omega(2+\omega)}{1-\omega^2}\ln\left(1+\frac{\omega}{2}\right),
\end{equation*}
 $c=\omega c_N=(N+\frac{1}{2}\alpha+\frac{1}{4})\pi\omega$ and $K_\alpha$ is a constant depending only of $\alpha.$	
\end{proposition}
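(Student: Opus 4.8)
The plan is to control the cardinality of the ``plunge region'' by the single scalar quantity $\sum_{n}\tilde\lambda_{n,\alpha}^N(\omega)\bigl(1-\tilde\lambda_{n,\alpha}^N(\omega)\bigr)$. First I would recall that, by the energy--maximisation interpretation \eqref{e3} together with the bi-orthogonality \eqref{8}, every eigenvalue $\tilde\lambda_{n,\alpha}^N(\omega)$ lies in $[0,1]$, so that on the interval $(\varepsilon,1-\varepsilon)$ one has the elementary bound $\lambda(1-\lambda)\ge \varepsilon(1-\varepsilon)$. Summing over the indices in the plunge region and using that $\tilde{\mathcal{Q}}^{\alpha}_{N,\omega}$ is self-adjoint with spectrum $\{\tilde\lambda_{n,\alpha}^N(\omega)\}$ gives
\[
\#\{n,\ \varepsilon<\tilde\lambda_{n,\alpha}^N(\omega)<1-\varepsilon\}\le \frac{1}{\varepsilon(1-\varepsilon)}\sum_{n}\tilde\lambda_{n,\alpha}^N(\omega)\bigl(1-\tilde\lambda_{n,\alpha}^N(\omega)\bigr)=\frac{1}{\varepsilon(1-\varepsilon)}\Bigl(\mbox{Trace}(\tilde{\mathcal{Q}}^{\alpha}_{N,\omega})-\norm{\tilde{\mathcal{Q}}^{\alpha}_{N,\omega}}_{HS}^2\Bigr).
\]
Thus the whole problem reduces to producing, up to a constant $K_\alpha$, an upper bound of the form $\frac{\omega^2}{c}+\ln(c)+G(\omega)$ for $\mbox{Trace}(\tilde{\mathcal{Q}}^{\alpha}_{N,\omega})-\norm{\tilde{\mathcal{Q}}^{\alpha}_{N,\omega}}_{HS}^2$, a nonnegative quantity.

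Second, I would feed in the two structural inputs already established. The trace is given by the Lemma, $\mbox{Trace}(\tilde{\mathcal{Q}}^{\alpha}_{N,\omega})=\mbox{Trace}(\mathcal{Q}_c^\alpha)+\bigl(\frac{1+\omega^2}{c}+\frac{1}{2\pi}\ln\frac{1+\omega}{1-\omega}\bigr)O(1)$, while for the Hilbert--Schmidt term I would insert the kernel decomposition \eqref{e6} of Proposition~3, namely $\tilde K^{(\alpha)}_{N,\omega}=K_{c_N}^\alpha+F_{c_N}^\alpha+O(1/c_N)$, and expand the square over $(0,\omega)^2$. The leading piece $\int_0^\omega\int_0^\omega (K_{c_N}^\alpha)^2=\mbox{Trace}((\mathcal{Q}_c^\alpha)^2)$ combines with $\mbox{Trace}(\mathcal{Q}_c^\alpha)$ to produce the continuous plunge sum $\sum_n\lambda_n^\alpha(c)\bigl(1-\lambda_n^\alpha(c)\bigr)$; this is exactly the quantity controlled in \cite{M.B}, and it supplies the genuine logarithmic growth $\ln(c)$ in the final estimate. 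Everything else is a correction built out of $F_{c_N}^\alpha$ and the $O(1/c_N)$ remainder.

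Third comes the delicate part: bounding the correction, which, passing to absolute values, consists of the diagonal term $\int_0^\omega F_{c_N}^\alpha(x,x)\,dx$, the cross term $2\int_0^\omega\int_0^\omega K_{c_N}^\alpha F_{c_N}^\alpha$, the quadratic term $\int_0^\omega\int_0^\omega (F_{c_N}^\alpha)^2$, together with the $\frac{1}{2\pi}\ln\frac{1+\omega}{1-\omega}$ contribution coming from the trace correction in the Lemma and the $O(1/c_N)$ remainders. The diagonal integral is $O(1/c)$ by Riemann--Lebesgue applied to the single oscillatory factor $\sin(2xc_N-2\gamma_\alpha)$ against the smooth weight $V(2x)$. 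For the quadratic term I would substitute the two--sided control $\frac{1}{2}\frac{r}{4-r^2}\le V(r)\le \frac{r}{4-r^2}$ and replace each $\sin^2$ by its mean $1/2$, the mixed product of the two distinct phases $\sin((x+y)c_N-2\gamma_\alpha)$ and $\sin((x-y)c_N)$ integrating to $O(\omega^2/c)$; this reduces it to the closed--form integrals of $\bigl(\frac{x\pm y}{4-(x\pm y)^2}\bigr)^2$ over $(0,\omega)^2$, which, evaluated by partial fractions and the substitution $u=x\pm y$, produce the logarithmic and rational boundary terms that, with the trace's $\ln\frac{1+\omega}{1-\omega}$ folded into the coefficient $(\tfrac12+\omega)$, assemble into the displayed function $G(\omega)$. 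The cross term is handled by the same phase averaging combined with the near--diagonal Dirichlet--kernel asymptotics of $K_{c_N}^\alpha$, keeping it at order $O(1)$ in $c$.

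I expect the main obstacle to be precisely this oscillatory cross term $\int_0^\omega\int_0^\omega K_{c_N}^\alpha F_{c_N}^\alpha$: since $\norm{\mathcal{Q}_c^\alpha}_{HS}$ is of size $\sqrt{c}$, one cannot afford to bound it by the product of Hilbert--Schmidt norms, and the whole estimate hinges on exhibiting the cancellation coming from the fast phase $c_N(x\pm y)$ shared by $K_{c_N}^\alpha$ and $F_{c_N}^\alpha$. Once that cancellation is quantified and the elementary rational integrals are computed, assembling the three contributions, namely $\ln c$ from \cite{M.B}, $\omega^2/c$ from the remainders, and $G(\omega)$ from the rational integrals, and dividing by $\varepsilon(1-\varepsilon)$ yields the claim with a constant $K_\alpha$ depending only on $\alpha$.
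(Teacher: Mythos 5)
Your skeleton is exactly the paper's: bound the plunge region by $\frac{1}{\varepsilon(1-\varepsilon)}\bigl(\mbox{Trace}(\tilde{\mathcal{Q}}^{\alpha}_{N,\omega})-\norm{\tilde{\mathcal{Q}}^{\alpha}_{N,\omega}}^2_{HS}\bigr)$, split this as $\bigl(\mbox{Trace}(\tilde{\mathcal{Q}}^{\alpha}_{N,\omega})-\mbox{Trace}(\mathcal{Q}^{\alpha}_{c})\bigr)+\bigl(\mbox{Trace}(\mathcal{Q}^{\alpha}_{c})-\norm{\mathcal{Q}^{\alpha}_{c}}^2_{HS}\bigr)+\bigl(\norm{\mathcal{Q}^{\alpha}_{c}}^2_{HS}-\norm{\tilde{\mathcal{Q}}^{\alpha}_{N,\omega}}^2_{HS}\bigr)$, use the Lemma for the first piece, the Abreu--Bandeira/\cite{M.B} bound $K\ln(c)+L$ for the second, and the kernel decomposition \eqref{e6} for the third. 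Where you diverge is in the last piece, and there your proposal is both incomplete and harder than necessary. First, the term $-\int_0^\omega\int_0^\omega (F^{\alpha}_{c_N})^2$ enters the difference $\norm{\mathcal{Q}^{\alpha}_{c}}^2_{HS}-\norm{\tilde{\mathcal{Q}}^{\alpha}_{N,\omega}}^2_{HS}$ with a favorable (negative) sign, so for an upper bound it can simply be discarded; your phase-averaging of $\sin^2$ is wasted effort. Second, and more importantly, you leave the cross term $2\int_0^\omega\int_0^\omega K^{\alpha}_{c_N}F^{\alpha}_{c_N}$ as an unquantified ``cancellation to be exhibited'' --- that is the one genuine gap in the write-up. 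You are right that the Hilbert--Schmidt product bound fails (it would give $O(\sqrt{c})$), but no oscillatory cancellation is needed: writing $K^{\alpha}_{c_N}$ in difference-quotient form and invoking Olenko's uniform bound on $\sqrt{x}J_\mu(x)$ (\cite{A.O}) together with the Bessel derivative formulas gives the pointwise estimate $|K^{\alpha}_{c_N}(x,y)|\leq C_\alpha/(x+y)$ uniformly in $c_N$. Combined with $|F^{\alpha}_{c_N}(x,y)|\leq \frac{2}{\pi}\bigl[\frac{x+y}{4-(x+y)^2}+\frac{|x-y|}{4-(x-y)^2}\bigr]$, the cross term is then bounded by an explicit $c$-independent double integral of a rational function, which is what produces $G(\omega)$ by partial fractions. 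If you replace your stationary-phase plan by this pointwise bound, your argument closes and coincides with the paper's.
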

\begin{proof}
Let's start by the following inequality which lead us to the wrights turn of the proof.
\begin{eqnarray*}
\varepsilon(1-\varepsilon)\sum_{\{n,\,\varepsilon<\tilde{\lambda}_{n,\alpha}^N(\omega)<1-\varepsilon\}}1&\leq&\sum_{n=0}^{N-1}\tilde{\lambda}_{n,\alpha}^N(\omega)(1-\tilde{\lambda}_{n,\alpha}^N(\omega))\\&=&\mbox{Trace}\left(\tilde{\mathcal{Q}}^{\alpha}_{N,\omega}\right)-\norm{\tilde{\mathcal{Q}}^{\alpha}_{N,\omega}}^2_{HS}
\end{eqnarray*}
where $\norm{\tilde{\mathcal{Q}}^{\alpha}_{N,\omega}}_{HS}$ is the Hilbert-Schmidt norm of $\tilde{\mathcal{Q}}^{\alpha}_{N,\omega}$. Hence, we get the desired first key of the proof, that is
\begin{eqnarray*}
\#\{n,\,\varepsilon<\tilde{\lambda}_{n,\alpha}^N(\omega)<1-\varepsilon\}&\leq&\frac{\mbox{Trace}\left(\tilde{\mathcal{Q}}^{\alpha}_{N,\omega}\right)-\norm{\tilde{\mathcal{Q}}^{\alpha}_{N,\omega}}^2_{HS}}{\varepsilon(1-\varepsilon)}\\&\leq&\frac{\left(\mbox{Trace}\left(\tilde{\mathcal{Q}}^{\alpha}_{N,\omega}\right)-\mbox{Trace}\left(\mathcal{Q}_{c}^\alpha\right)\right)+\left(\mbox{Trace}\left(\mathcal{Q}_{c}^\alpha\right)-\norm{\mathcal{Q}^{\alpha}_{c}}^2_{HS}\right)+\left(\norm{\mathcal{Q}^{\alpha}_{c}}^2_{HS}-\norm{\tilde{\mathcal{Q}}^{\alpha}_{N,\omega}}^2_{HS}\right)}{\varepsilon(1-\varepsilon)}.
\end{eqnarray*}
From the previous lemma, we have
 \begin{equation}\label{e8}
\mbox{Trace}\left(\tilde{\mathcal{Q}}^{\alpha}_{N,\omega}\right)-\mbox{Trace}\left(\mathcal{Q}_{c}^\alpha\right)=\left(\frac{1}{2\pi}\ln\left(\frac{1+\omega}{1-\omega}\right)+\frac{\omega^2}{c}\right)O(1),\end{equation} and from Abreu and Bandeira in \cite{Abreu} see also \cite{M.B}, we have \begin{equation}\label{e9}
\mbox{Trace}\left(\mathcal{Q}_{c}^\alpha\right)-\norm{\mathcal{Q}^{\alpha}_{c}}^2_{HS}\leq K\ln(c)+L.\end{equation}
So, it remaining to give an upper estimate of $\norm{\mathcal{Q}^{\alpha}_{c}}^2_{HS}-\norm{\tilde{\mathcal{Q}}^{\alpha}_{N,\omega}}^2_{HS}.$ On the other hand, from \eqref{e6} and for $c=\omega c_N$, we have
\begin{eqnarray*}
\norm{\tilde{\mathcal{Q}}^{\alpha}_{N,\omega}}^2_{HS}&=&\int_0^{\omega}\int_0^{\omega}\left(\tilde{K}^{\alpha}_{N,\omega}(x,y)\right)^2dxdy\\&=&\int_0^{\omega}\int_0^{\omega}\left(K^{\alpha}_{c_N}(x,y)\right)^2dxdy+\int_0^{\omega}\int_0^{\omega}\left(F^{\alpha}_{c_N}(x,y)\right)^2dxdy+2\int_0^{\omega}\int_0^{\omega}F^{\alpha}_{c_N}(x,y)K^{\alpha}_{N,\omega}(x,y)dxdy+O\left(\frac{\omega^2}{c_N}\right)\\&=&\norm{\mathcal{Q}^{\alpha}_{c}}^2_{HS}+\int_0^{\omega}\int_0^{\omega}\left(F^{\alpha}_{c_N}(x,y)\right)^2dxdy+2\int_0^{\omega}\int_0^{\omega}K^{\alpha}_{c_N}(x,y)F^{\alpha}_{c_N}(x,y)dxdy+O\left(\frac{\omega^2}{c_N}\right).
\end{eqnarray*}
It follows that 
\begin{eqnarray*}
\norm{\mathcal{Q}^{\alpha}_{c}}^2_{HS}-\norm{\tilde{\mathcal{Q}}^{\alpha}_{N,\omega}}^2_{HS}&=&-2\int_0^{\omega}\int_0^{\omega}K^{\alpha}_{c_N}(x,y)F^{\alpha}_{c_N}(x,y)dxdy-\int_0^{\omega}\int_0^{\omega}\left(F^{\alpha}_{c_N}(x,y)\right)^2dxdy+O\left(\frac{\omega^2}{c_N}\right).
\end{eqnarray*}
On the other hand, we have
\begin{eqnarray*}
K^{\alpha}_{c_N}(x,y)&=&c_N\frac{\sqrt{xy}}{x^2-y^2}\left(xJ_{\alpha+1}(c_Nx)J_{\alpha}(c_Ny)-yJ_{\alpha+1}(c_Ny)J_{\alpha}(c_Nx)\right)\\&=&\frac{x\sqrt{c_Nx}J_{\alpha+1}(c_Nx)}{x+y}\left(\frac{\sqrt{c_Ny}J_{\alpha+1}(c_Ny)-\sqrt{c_Nx}J_{\alpha+1}(c_Nx)}{x-y}\right)\\&+&\frac{\sqrt{c_Nx}J_{\alpha+1}(c_Nx)}{c_N(x+y)}\left(\frac{(c_Ny)^{3/2}J_{\alpha+1}(c_Ny)-(c_Nx)^{3/2}J_{\alpha+1}(c_Nx)}{x-y}\right),
\end{eqnarray*}
then, from \cite{A.O} and  derivatives formulas associated with Bessel function given in \cite{Watson}, we get
\begin{equation}
\left|K^{\alpha}_{c_N}(x,y)\right|\leq C_{\alpha}\frac{1}{x+y}.
\end{equation}
Further, we have \begin{equation}|F^{\alpha}_{c_N}(x,y)|\leq \frac{2}{\pi}\left[\frac{x+y}{4-(x+y)^2}+\frac{|x-y|}{4-(x-y)^2}\right].\end{equation}
Finally, from the two last inequalities and by a straightforward computations, we get the following estimate

\begin{eqnarray}\label{e10}
\norm{\mathcal{Q}^{\alpha}_{c}}^2_{HS}-\norm{\tilde{\mathcal{Q}}^{\alpha}_{N,\omega}}^2_{HS}&\leq&\frac{C_{\alpha}}{\pi}\int_0^{\omega}\int_0^{\omega}\left[\frac{2}{4-(x+y)^2}+\frac{1}{(x+y)}\frac{2|x-y|}{(4-(x-y)^2)}\right]dxdy+O\left(\frac{\omega^2}{c_N}\right)\\&\leq&\frac{C_{\alpha}}{\pi}\int_0^{\omega}\int_0^{\omega}\left[\frac{2}{4-(x+y)^2}\right]dxdy+\frac{2C_{\alpha}}{\pi}\int_0^{\omega}\int_y^{\omega}\left[\frac{1}{(x+y)}\frac{2(x-y)}{(4-(x-y)^2)}\right]dxdy+O\left(\frac{\omega^2}{c_N}\right)\nonumber\\&\leq&\frac{C_{\alpha}}{2\pi}\int_0^{\omega}\int_0^{\omega}\left[\frac{1}{2-(x+y)}+\frac{1}{2+(x+y)}\right]dxdy\nonumber\\&+&\frac{2C_{\alpha}}{\pi}\int_0^{\omega}\int_y^{\omega}\left[\frac{1}{2(1+y)(2-(x-y))}+\frac{1}{2(1-y)(2+(x-y))}\right]dxdy\nonumber\\&-&\frac{2C_{\alpha}}{\pi}\int_0^{\omega}\int_y^{\omega}\left[\frac{y}{(1-y^2)(x+y)}\right]dxdy+O\left(\frac{\omega^2}{c_N}\right)\nonumber\\&\leq&\frac{C_{\alpha}}{\pi}\left[\omega\ln\left(\frac{1+\omega}{1-\omega}\right)+\ln\left(\frac{4(1-w^2)}{4-\omega^2}\sqrt{\frac{2-\omega}{2+\omega}}\right)+2\frac{\omega(2+\omega)}{1-\omega^2}\ln\left(1+\frac{\omega}{2}\right)\right]+O\left(\frac{\omega^2}{c_N}\right)\nonumber.
\end{eqnarray}
Finally, from \eqref{e8}, \eqref{e9} and \eqref{e10}, we get the desired result.	
	\end{proof}
\section{Application}
Our aim in this section deals with the universal constant appearing in Ingham concentration inequality given in \cite{I}. Let's recall first the Ingham result's
\begin{theorem}[Ingham \cite{I}]
	Let $T>1$ a real fixed number and $N\geq 1$ be an integer, then there exist a constant $C(T)$ depending only of $T$ such that for every complex numbers $a_1,...,a_N$ and $\lambda_1<...<\lambda_N$ $N$ real numbers satisfy $|\lambda_{k+1}-\lambda_{k}|\geq \gamma>0,$ $\gamma >\frac{\pi}{T},$ we have
	\begin{equation*}
	\frac{1}{2T}\int_{-T}^{T}\left|\sum_{k=1}^Na_ke^{i\pi \lambda_kt}\right|^2dt\geq C(T)\sum_{k=1}^N|a_k|^2.
	\end{equation*} 
	In particular, for every $N$ integers $n_1<...<n_N$ and $T>1,$ we have 
	\begin{equation}
	\frac{1}{2T}\int_{-T}^{T}\left|\sum_{k=1}^Na_ke^{i\pi n_kt}\right|^2dt\geq C(T)\sum_{k=1}^N|a_k|^2,\end{equation}
\end{theorem}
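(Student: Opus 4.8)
This is I.\,M.\ Ingham's classical coercivity (lower) inequality, so the plan is to reproduce his weight-function argument in Fourier form. Write $f(t)=\sum_{k=1}^N a_k e^{i\omega_k t}$ with $\omega_k=\pi\lambda_k$, so that the frequency gap is $\omega_{k+1}-\omega_k=\pi(\lambda_{k+1}-\lambda_k)\ge \pi\gamma$. The factor $\frac1{2T}$ is inessential, so it suffices to produce a constant $c(T)>0$ with $\int_{-T}^T\abs{f(t)}^2\,dt\ge c(T)\sum_k\abs{a_k}^2$ and then set $C(T)=c(T)/(2T)$. First I would introduce a nonnegative weight vanishing at the endpoints, namely $h(t)=\cos\!\bigl(\tfrac{\pi t}{2T}\bigr)$ for $\abs t\le T$ and $h(t)=0$ otherwise. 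Since $0\le h\le1$ on $[-T,T]$ and $h$ is supported in $[-T,T]$,
\[
\int_{-T}^{T}\abs{f(t)}^2\,dt\ \ge\ \int_{-T}^{T}h(t)\abs{f(t)}^2\,dt\ =\ \int_{\R}h(t)\abs{f(t)}^2\,dt .
\]

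Next I would expand the majorised integral. Inserting $\abs{f}^2=\sum_{k,l}a_k\overline{a_l}e^{i(\omega_k-\omega_l)t}$ and integrating termwise gives $\int_{\R}h\abs f^2=\sum_{k,l}a_k\overline{a_l}\,\widehat h(\omega_k-\omega_l)$, where $\widehat h(\xi)=\int_{\R}h(t)e^{-i\xi t}\,dt$ is real and even. A direct computation yields the closed form $\widehat h(\xi)=\dfrac{4\pi T\cos(T\xi)}{\pi^2-4T^2\xi^2}$; in particular the diagonal ($k=l$) contribution is $\widehat h(0)\sum_k\abs{a_k}^2=\tfrac{4T}{\pi}\sum_k\abs{a_k}^2$, while for $\abs\xi>\tfrac{\pi}{2T}$ one has the decay estimate $\abs{\widehat h(\xi)}\le \dfrac{4\pi T}{4T^2\xi^2-\pi^2}$.

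The core of the argument is to show that the off-diagonal part is strictly dominated by the diagonal one. From the gap hypothesis $\abs{\omega_k-\omega_l}\ge \pi\gamma\abs{k-l}$, and because $\gamma>\pi/T$ forces every off-diagonal difference to exceed $\pi/(2T)$, the decay estimate applies; the elementary bound $4T^2\xi^2-\pi^2\ge \abs{k-l}^2\pi^2(4T^2\gamma^2-1)$ then gives
\[
\Bigl|\sum_{k\neq l}a_k\overline{a_l}\,\widehat h(\omega_k-\omega_l)\Bigr|\ \le\ \frac{4\pi T}{\pi^2(4T^2\gamma^2-1)}\sum_{k\neq l}\frac{\abs{a_k}\abs{a_l}}{\abs{k-l}^2}.
\]
Applying $\abs{a_k}\abs{a_l}\le\tfrac12(\abs{a_k}^2+\abs{a_l}^2)$ and $\sum_{m\ge1}m^{-2}=\pi^2/6$ collapses the double sum to $\tfrac{\pi^2}{3}\sum_k\abs{a_k}^2$, so the two contributions combine to a lower bound $4T\bigl(\tfrac1\pi-\tfrac{\pi}{3(4T^2\gamma^2-1)}\bigr)\sum_k\abs{a_k}^2$. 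Under the hypothesis $\gamma>\pi/T$ one has $4T^2\gamma^2>4\pi^2$, which makes the bracket strictly positive; replacing $\gamma$ by its lower bound in the subtracted term produces the quantity $c(T)=4T\bigl(\tfrac1\pi-\tfrac{\pi}{3(4\pi^2-1)}\bigr)$, which depends on $T$ alone and is the claimed constant.

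Finally, the particular case is the \emph{sharp} endpoint: with $\lambda_k=n_k\in\mathbb Z$ the frequencies $\pi n_k$ have gap at least $\pi$, so Ingham's threshold $T\cdot(\text{gap})>\pi$ reads exactly $T>1$. I expect the genuine obstacle to lie precisely here, at the sharp constant. The crude cosine weight above comfortably settles the general inequality under the stated hypothesis $\gamma>\pi/T$, but it is slightly lossy near the optimal threshold, so reaching the sharp range $T>1$ in the integer case requires Ingham's more delicate choice of auxiliary weight together with a tighter control of the tail contributions in the $\sum_{m\ge1}m^{-2}$ estimate. Every other step is a routine Fourier computation.
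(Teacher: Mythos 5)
The paper does not prove this statement at all: it is quoted verbatim from Ingham's 1936 paper \cite{I} and used as a black box, so there is no internal proof to compare against. Your weight-function argument is the classical route to this inequality, and for the main claim under the stated hypothesis $\gamma>\pi/T$ it is correct and complete: the computation of $\widehat h$, the value $\widehat h(0)=4T/\pi$, the decay bound for $|\xi|>\pi/(2T)$, and the positivity of the bracket $\tfrac1\pi-\tfrac{\pi}{3(4T^2\gamma^2-1)}$ when $4T^2\gamma^2>4\pi^2$ all check out.

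The one genuine gap is the one you flagged yourself: the ``in particular'' clause asserts the inequality for integer frequencies for \emph{all} $T>1$, and your estimate does not reach that range. With $\lambda_k=n_k$ (so $\gamma=1$) your bracket $\tfrac1\pi-\tfrac{\pi}{3(4T^2-1)}$ is positive only for $T^2>(\pi^2+3)/12$, i.e.\ $T\gtrsim 1.036$, so the statement as written is not fully established. However, the repair does not require a different weight, only a sharper treatment of the off-diagonal sum: bound it by a Schur test, $\bigl|\sum_{k\neq l}a_k\overline{a_l}\widehat h(\omega_k-\omega_l)\bigr|\le\sum_k|a_k|^2\,\max_k\sum_{l\neq k}|\widehat h(\omega_k-\omega_l)|\le\frac{8T}{\pi}\sum_{m\ge1}\frac{1}{4T^2m^2-1}\sum_k|a_k|^2$, then use $4T^2m^2-1\ge T^2(4m^2-1)$ for $T\ge1$ together with the telescoping identity $\sum_{m\ge1}\frac{1}{4m^2-1}=\frac12$. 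This replaces your lossy $\sum_{m\ge1}m^{-2}=\pi^2/6$ step and yields the lower bound $\frac{4T}{\pi}\bigl(1-T^{-2}\bigr)\sum_k|a_k|^2$, hence Ingham's classical constant $C(T)=\frac{2}{\pi}(1-T^{-2})>0$ for every $T>1$. With that substitution your proof closes the sharp endpoint and is complete.
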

which we can rewrite on the following form using the simple change of variable $u=\frac{t}{2N(T)},$
\begin{equation}\label{e12}
\int_{-\frac{T}{2N(T)}}^{\frac{T}{2N(T)}}\left|\sum_{k=1}^Na_ke^{2i\pi N(T)n_kt}\right|^2dt\geq\frac{T}{N(T)} C(T)\sum_{k=1}^N|a_k|^2,
\end{equation}
where $N(T)=[T]+1$. Let consider a finite suitable sequence of integers $n_1<...<n_N$ and $a_1<...<a_N$ be $N$ complex numbers. Let $S_{n_N}=\{(b_k)_{k\in\mathbb{N}}\in\ell^2(\mathbb{R}),\,b_k=0,\,\forall k>N(T)n_N\}$ be the set of index limited sequence over $[|1,...,N(T)n_N|],$ so by taken the function $\varphi_N(t)=\sum_{k=1}^Nb_{N(T)n_k}e^{2i\pi N(T)n_kt},$ we write \eqref{e12} on the following form,
\begin{equation}
 \frac{T}{N(T)}C(T)\leq\frac{\norm{\varphi_N}_{L^2(-\frac{T}{2N(T)},\frac{T}{2N(T)})}}{\norm{\varphi_N}_{L^2(-1/2,1/2)}},
\end{equation}
where $b_{N(T)n_k}=a_k,\,1\leq k\leq N.$ Let $\varDelta_N=\{N(T)n_1,...,N(T)n_N\}$ and  $S_N=\{(b_j)_{j\in\mathbb{N}}\in\ell^2(\mathbb{R}),\,b_j=0,\,\forall j\notin\varDelta_N\}$ be the subspace of $S_{n_N}$ of dimensional $N.$ It's easy to see that the sequence $(b_k\chi_{\varDelta_N}(k))_{k\in\mathbb{N}}\in S_N$ and $\varphi_N$ is their amplitude spectra. So from \eqref{e3}, which we can generalize it using the different details given in general principle section, we have
\begin{equation}
C(T)\leq\frac{N(T)}{T} \tilde{\lambda}_{N}^{N(T)n_N}\left(\frac{T}{2N(T)}\right). 
\end{equation}
The decay rate of the classical eigenvalue $\tilde{\lambda}_{n}^{N}(\omega)$ was given in \cite{B.B.K} equation $(12)$, that is for every $0<\omega<1/2,$ and $\frac{e\pi}{2}N\omega\leq n\leq N-1,$ we have
\begin{equation}
\tilde{\lambda}_{n}^{N}(\omega)\leq A_\omega e^{-(2n+1)\ln\left(\frac{2(n+1)}{e\pi N\omega}\right)},
\end{equation}
where $A_\omega=2\pi^2\left(\frac{1/4-\omega^2}{\cos(\pi\omega)}\right)^2.$ So, with our notations, we take $\omega=\frac{T}{2N(T)}, n=N$ and $N=N(T)n_N,$ then for all $\frac{e\pi}{4}n_NT\leq N\leq N(T)n_N-1$, we get the following inequality
\begin{equation}
C(T)\leq A_T\,\frac{N(T)}{T}e^{-(2N+1)\ln\left(\frac{4(N+1)}{e\pi n_N T}\right)},
\end{equation}
where $A_T=\frac{\pi^2}{8}\left(\frac{1-\frac{T^2}{N^2(T)}}{\cos\left(\frac{\pi}{2}\frac{T}{N(T)}\right)}\right)^2.$ Finally, we have the following result
\begin{equation}
C(T)\leq \frac{\pi^2}{8e^2}\frac{N(T)}{T}\left(\frac{1-\frac{T^2}{N^2(T)}}{\cos\left(\frac{\pi}{2}\frac{T}{N(T)}\right)}\right)^2.
\end{equation}
When $T\to\infty,$ we obtain a better upper bound for Besikovitch norm collected with the lower bound constant given in \cite{J. S}, we get the following result. $$ 0.15\sim\frac{\pi^2}{64}\leq\displaystyle\overline{\lim}(C(T))\leq\frac{2}{e^2}\sim 0.27.$$  

\end{document}